\documentclass[twoside,12pt,leqno]{amsproc}
\usepackage{amssymb,stmaryrd,latexsym,enumerate,tikz,booktabs}
\usetikzlibrary{patterns}
\usepackage[pagebackref]{hyperref}
\usepackage{amsrefs}    
\usepackage{etoolbox}   
\usepackage{mathtools}  
\hypersetup{citecolor=purple, linkcolor=blue, colorlinks=true}
\usepackage{bookmark}
\numberwithin{table}{section}

\theoremstyle{plain}
\newtheorem{theorem}{Theorem}[section]
\newtheorem{lemma}[theorem]{Lemma}
\newtheorem{proposition}[theorem]{Proposition}
\newtheorem{corollary}[theorem]{Corollary}

\theoremstyle{definition} 
\newtheorem{definition}[theorem]{Definition}
\newtheorem{remark}[theorem]{Remark}
\newtheorem{question}[theorem]{Question}
\newtheorem{example}[theorem]{Example}
\AtEndEnvironment{remark}{\null\hfill$\Diamond$}

\binoppenalty=10000\relpenalty=10000\hyphenpenalty=9200

\oddsidemargin 0pt
\evensidemargin 0pt
\textheight 8.1in \textwidth 6.3in
\setlength{\parskip}{5pt}

\renewcommand{\geq}{\geqslant}
\renewcommand{\leq}{\leqslant}
\renewcommand{\ge}{\geqslant}
\renewcommand{\le}{\leqslant}
\newcommand{\lhdeq}{\trianglelefteqslant}    
\newcommand{\coloneq}{\vcentcolon=}      

\newcommand{\Aut}{\mathrm{Aut}}
\newcommand{\B}{\mathcal{B}}
\newcommand{\C}{\mathrm{C}}

\newcommand{\End}{\textup{End}}

\newcommand{\PP}{\textup{P\kern-1ptP}}

\newcommand{\Sym}{\mathrm{Sym}}

\newcommand{\W}{\mathcal{W}}
\newcommand{\Z}{\mathrm{Z}}
\newcommand{\ZZ}{\mathbb{Z}}

\makeatletter        
\def\@adminfootnotes{%
  \let\@makefnmark\relax  \let\@thefnmark\relax
  \ifx\@empty\@date\else \@footnotetext{\@setdate}\fi
  \ifx\@empty\@subjclass\else \@footnotetext{\@setsubjclass}\fi
  \ifx\@empty\@keywords\else \@footnotetext{\@setkeywords}\fi
  \ifx\@empty\thankses\else \@footnotetext{%
    \def\par{\let\par\@par}\@setthanks}%
  \fi}\makeatother   

\usepackage{todonotes}

\begin{document}

\hyphenation{}

\title[Groups satisfying the functional equation \texorpdfstring{$f(xk)=xf(x)$}{}]{The groups \texorpdfstring{$G$}{G} satisfying a functional equation\\ \texorpdfstring{$f(xk)=xf(x)$}{f(xk)=xf(x)} for some \texorpdfstring{$k \in G$}{k in G}}
\author[Bernhardt, Boykett, Devillers, Flake, Glasby]{Dominik Bernhardt, Tim Boykett, Alice Devillers, Johannes Flake, S.\,P. Glasby}

\address[Bernhardt, Flake]{
Algebra and Representation Theory, RWTH Aachen University,
Pontdriesch 10-16,
52062 Aachen, Germany.
Email: {\tt \{bernhardt,\;flake\}@art.rwth-aachen.de}}

\address[Boykett]
{Time's Up Research, Industriezeile 33b, Linz, 4020 Austria\newline and Institute for Algebra, Johannes Kepler University Linz, 4040 Austria\newline
and Design Investigations, University of Applied Arts, Vienna.\newline
Email: {\tt tim@timesup.org, tim.boykett@jku.at }}

\address[Devillers, Glasby]{
Centre for Mathematics of Symmetry and Computation,
University of Western Australia,
35 Stirling Highway,
Perth 6009, Australia.\newline
 Email: {\tt \{Stephen.Glasby,\;Alice.Devillers\}@uwa.edu.au; WWW: \kern-1pt\href{http://www.maths.uwa.edu.au/~glasby/}{https://stephenglasby.github.io/}}
}
\date{\today\hfill 2010 Mathematics subject classification:
  20D15, 20E34, 20F10}

\begin{abstract}
  We study the groups $G$  with the curious property that there exists an element $k\in G$ and a function
  $f\colon G\to G$ such that $f(xk)=xf(x)$ holds for all $x\in G$. This property arose from the study of near-rings and input-output automata on groups. We call a group with this property a \emph{$J$-group}. Finite $J$-groups must have odd order, and hence are solvable. 
  We prove that every finite nilpotent group of odd order is a $J$-group if its nilpotency class $c$ satisfies $c\le6$. 
  If $G$ is a finite $p$-group, with $p>2$ and $p^2>2c-1$, then we prove that $G$ is $J$-group. Finally, 
  if $p>2$ and $G$ is a regular $p$-group or, more generally, a power-closed one (i.e., in each section and for each $m\geq1$ the subset of $p^m$-th powers is a subgroup), then we prove that $G$ is a $J$-group.
\vskip2mm
\noindent  Keywords: $p$-group, nilpotent, state automata
\end{abstract}

\maketitle

\section{Introduction}\label{S1}

Investigations into near-rings and state automata on groups led to a definition of a class of groups with the following curious (and interesting) property.

\begin{definition}\label{D0}
A group $G$ is a \emph{$J$-group} if there exists an element $k\in G$ and a
function $f\colon G\to G$ satisfying
\begin{equation}\label{definingidentity}
f(xk)=xf(x) \quad\text{ for all }x\in G. 
\end{equation}
\end{definition}
Instead of $f(xk)=xf(x)$ in~\eqref{definingidentity}, we could consider three other identities, namely $f(xk)=f(x)x$, $f(kx)=xf(x)$ or $f(kx)=f(x)x$. These yield four different conditions for a  non-abelian group.
It is surprising, but comforting, that these four conditions are equivalent when the group is non-abelian, see Remark~\ref{R:equiv}.

We view the element $k\in G$ in~\eqref{definingidentity} as a \emph{constant}, and we call it a {\it witness} for the group~$G$. If $k$ has finite order~$n$, then we prove that $G$ is a $J$-group with witness $k$
if and only if $\prod_{i=1}^n xk^{n-i}=1$ holds for all $x\in G$. It is interesting that from such a $k$, a suitable
function $f\colon G\to G$ satisfying~\eqref{definingidentity} can be constructed, see Lemma~\ref{L0}.
In particular, if $G=\langle k\rangle$ has order~$n$, then $\prod_{i=1}^n xk^{n-i}=1$ holds if and only if $k^{n(n+1)/2}=1$, that is, if and only if $n$ is odd. We prove in
Corollary~\ref{cor:oddorder} that no group of even order is
a $J$-group. 

Since a finite $J$-group must have odd order by Corollary~\ref{cor:oddorder}, it is necessarily solvable. However, not all groups of odd order are $J$-groups
by Remark~\ref{R3}. We ask: 
\begin{question}\label{Conj}
  Is every finite nilpotent group of odd order a $J$-group?
\end{question}

Our main results use the following definitions. The \emph{lower central series} for a group $G$ is defined recursively by $\gamma_1(G)=G$ and
$\gamma_{i+1}(G)=[G,\gamma_i(G)]$ where $[X,Y]$ is the subgroup $\langle [x,y]\mid x\in X, y\in Y\rangle$ and $[x,y]=x^{-1}y^{-1}xy$. We say that $G$ is
\emph{nilpotent of class~$c$} if $c\ge 0$ is minimal such that $\gamma_{c+1}(G)=1$. (Every finite $p$-group is nilpotent.) A $p$-group $G$ is called \emph{power-closed}
if in each section of $G$ and for each $m\ge1$, products of $p^m$-th powers are $p^m$-th powers.

Our three main results (Theorems~\ref{T7},~\ref{T8} and~\ref{T:power-closed}) are as follows. First, if $G$ is a
finite $p$-group (with $p$ odd) of class~$c$ satisfying $p^2>2c-1$, then $G$ is a $J$-group, and if $p>2c-1$, then every
element of $G$ of maximal order is a witness (that is, a possible $k$). The fact that a witness need not have 
maximal order (see Example~\ref{nonbigwitness}) seems to complicate the theory. 
Second, if $G$ is a $p$-group of class~$c$ and $\gamma_d(G)$ contains an element of maximal order, where $d\ge c-5$,
then $G$ is a $J$-group. Third, every power-closed $p$-group (with $p$ odd) is a $J$-group. Even with these results, Question \ref{Conj} remains open in its general form.

Basic properties of $J$-groups are proved in Sections~\ref{S2} and~\ref{S3}. Our main theorems are proved in Section~\ref{S4} and
we study non-nilpotent $J$-groups and some number theoretic conditions in Section~\ref{S5}.

We now say a few words on the origin of this problem.
Given a module $M$ over a principal ring, Liebert~\cite{liebert73} studied the Jacobson radical of the endomorphism ring $\End(M)$, denoted ${\mathcal J}(\End(M))$.
He proved in ~\cite{liebert73}*{Lemma~2.1} that $K(M)\subseteq{\mathcal J}(\End(M))\subseteq H(M)$ holds where $K(M)$ and $H(M)$ are certain 2-sided ideals of $\End(M)$. 
Property~\eqref{definingidentity} arose (in an additive form) in~\cite{BW}*{Definition~8},  from the study of the 2-Jacobson radical ${\mathcal J}_2$
(hence the name $J$-group\footnote{Note that in~\cite{BW} the groups were said to
have Property~X (for want of a better name) instead of being called $J$-groups.}) of left near-rings associated with input-output
automata on groups. The connection with near-rings and the motivation for our work is described in \cite{Paper2}.
Roughly speaking, Theorem~7 of~\cite{BW} defines analogous near-ring ideals $K$ and $H$ and
proves that $K\subseteq {\mathcal J}_2\subseteq H$, and~\cite{BW}*{Theorem~10} shows that $(K=){\mathcal J}_2=H$ holds if $G$ is a $J$-group, where the first
equality follows from the proof.
It is not known whether the only groups for which $K={\mathcal J}_2=H$ holds are $J$-groups, but this is an obvious question. For precise definitions, and further context, we refer the reader to \cite{Paper2}.

\section{Witnesses}\label{S2}

In this section we prove that a function~$f$ satisfying~\eqref{definingidentity} can
be constructed from the constant $k$. Recall that $k$ is called a \emph{witness}.
We also prove that finite $J$-groups are solvable, and we establish properties
which yield fast algorithms for testing whether a given finite
group is a $J$-group. {\sc Magma}~\cite{Magma} code for studying $J$-groups is available at~\cite{G}.

\begin{lemma}\label{L-1}
  Let $G$ be a $J$-group with function $f$ and witness $k$.
  The following equations hold for any positive integer $m$ and any $x\in G$:
  \begin{enumerate}[{\rm (a)}]
    \item $f(xk^m)=xk^{m-1}xk^{m-2}x\cdots xk^2xkxf(x)
    =x^m(k^{m-1})^{x^{m-1}}\cdots(k^2)^{x^2}k^xf(x),$ and 
     \item $f(xk^{-m})=k^{m}x^{-1}k^{m-1}x^{-1}\cdots x^{-1}k^2x^{-1}kx^{-1}f(x)
    =x^{-m}(k^{m})^{x^{-m}}\cdots(k^2)^{x^{-2}}k^{x^{-1}}f(x).$ 
    \end{enumerate}    
\end{lemma}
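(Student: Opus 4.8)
The plan is to prove both identities by induction on $m$, using the defining equation~\eqref{definingidentity} repeatedly, and then to reconcile the two expressions on the right-hand side of each part (the ``telescoping product'' form and the ``conjugate'' form) by a purely formal manipulation of conjugates.

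For part (a), I would start from the base case $m=1$, which is exactly $f(xk)=xf(x)$, i.e.\ the defining identity. For the inductive step, assuming the formula for $m$, I would write $f(xk^{m+1}) = f\bigl((xk^m)k\bigr)$ and apply~\eqref{definingidentity} with $xk^m$ in place of $x$ — but one must be slightly careful here, since~\eqref{definingidentity} reads $f(yk)=yf(y)$, so $f(xk^{m+1}) = (xk^m)\,f(xk^m)$, and then substitute the inductive hypothesis for $f(xk^m)$. This immediately yields the first (telescoping) form $f(xk^{m+1}) = xk^m\cdot xk^{m-1}x\cdots xkxf(x)$. The second form is then obtained by rewriting the product $xk^m\cdot xk^{m-1}\cdots xk\cdot x$ by sliding all the $x$'s to the left: collecting them gives a leading $x^{m+1}$, and each $k^j$ picks up a conjugating power of $x$ equal to the number of $x$'s to its left after the rearrangement, which is $x^j$. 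This is the standard identity $a_1 b_1 a_2 b_2 \cdots = (a_1a_2\cdots)\,b_1^{a_2a_3\cdots}b_2^{a_3\cdots}\cdots$ specialized to $a_i=x$, and I would either cite it as routine or verify it by a short secondary induction.

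For part (b), the cleanest route is to substitute $x\mapsto xk^{-m}$ into the defining identity to get $f(x) = f\bigl((xk^{-m})k^m\bigr)$, and then apply part (a) with $xk^{-m}$ in place of $x$ and exponent $m$. This gives $f(x) = (xk^{-m})k^{m-1}(xk^{-m})k^{m-2}\cdots(xk^{-m})k(xk^{-m})f(xk^{-m})$, and one solves for $f(xk^{-m})$ by left-multiplying by the inverse of the long product. Simplifying the inverse — which reverses the order and negates each factor — produces the claimed telescoping form $f(xk^{-m}) = k^m x^{-1}k^{m-1}x^{-1}\cdots x^{-1}k x^{-1}f(x)$; note the exponents on $k$ now run up to $m$ rather than $m-1$, which is the expected asymmetry coming from inverting $k^{-m}$ through the product. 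The conjugate form then follows by the same left-collection trick as in part (a), now with $x^{-1}$ playing the role of the repeated factor, so $k^j$ acquires the conjugator $x^{-j}$.

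The only real obstacle is bookkeeping: keeping the order of the noncommutative product straight through the inductive steps, getting the direction of~\eqref{definingidentity} right (it is $f(yk)=yf(y)$, with the new factor $y$ appearing on the \emph{left} of $f(y)$, which forces the telescoping product to grow on the left as $m$ increases), and correctly tracking the off-by-one in the $k$-exponents between parts (a) and (b). Once the telescoping forms are established, converting them to the conjugate forms is a formal identity about products in any group and carries no difficulty. I would present part (a) in full detail and treat part (b) as an application of part (a) as above, since deriving it independently by induction would duplicate essentially the same computation.
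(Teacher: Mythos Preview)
Your proposal is correct and, for part (a), is identical to the paper's proof: induction on $m$ using the defining identity, followed by collecting powers of $x$ to the left. For part (b) the paper runs a second, independent induction on $m$ (starting from $f(xk^{-1})=kx^{-1}f(x)$, which it obtains by rearranging $f(x)=f(xk^{-1}\cdot k)=xk^{-1}f(xk^{-1})$), whereas you derive (b) in one stroke from (a) via the substitution $x\mapsto xk^{-m}$ and inversion of the resulting product; your route is slightly more economical and avoids repeating the induction, while the paper's has the minor advantage of not requiring one to invert and simplify a long product.
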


\begin{proof}
We use induction on $m$ to prove parts~(a) and (b). 

(a)~The identity $f(xk^m)=xk^{m-1}xk^{m-2}x\cdots xk^2xkxf(x)$ is true
when $m=1$ by~\eqref{definingidentity}. Suppose $m>1$.
Using~\eqref{definingidentity} and induction yields
\[
  f(xk^m)=xk^{m-1}f(xk^{m-1})=xk^{m-1}xk^{m-2}x\cdots xk^2xkxf(x),
\]
as desired. The second equation in~(a)  follows by collecting powers
of $x$ to the left.  

(b)~Observe that 
  $f(x)=f(xk^{-1}k)=xk^{-1}f(xk^{-1})$, and so $f(xk^{-1})=kx^{-1}f(x)$. Hence the identity
  $f(xk^{-m})=k^{m}x^{-1}k^{m-1}x^{-1}\cdots x^{-1}k^2x^{-1}kx^{-1}f(x)$ is true
  when $m=1$. Suppose $m>1$. By~\eqref{definingidentity} we have
  $f(xk^{-m}k)=xk^{-m}f(xk^{-m})$, and by induction
  \[
  f(xk^{-m})=k^mx^{-1}f(xk^{-(m-1)})= k^mx^{-1}k^{m-1}x^{-1}k^{m-2}x^{-1}\cdots x^{-1}k^2x^{-1}kx^{-1}f(x).
  \]
  Hence the first equation of part~(b) follows.  The second equation follows by collecting powers of $x^{-1}$ to the left.
\end{proof}

We prove that groups with an element of infinite order are $J$-groups.
We employ the convention that an empty product such as $\prod_{j=1}^0$ or
$\prod_{j=0}^{-1}$ equals 1.

\begin{proposition}\label{P-inforder}
Let $G$ be a group with an element $k$ of infinite order. Then $G$ is a $J$-group with constant $k$.
\end{proposition}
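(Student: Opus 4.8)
The plan is to use the element $k$ of infinite order to build a ``consistent'' partial definition of $f$ on the coset $\langle k\rangle$-orbits and then extend arbitrarily. Concretely, let $T$ be a transversal for the right cosets of $\langle k\rangle$ in $G$, so that every $x\in G$ is uniquely of the form $x=tk^m$ with $t\in T$ and $m\in\ZZ$. The defining identity~\eqref{definingidentity} relates the value of $f$ on $xk$ to its value on $x$; iterating via Lemma~\ref{L-1}, all values of $f$ on a single coset $t\langle k\rangle$ are determined by a single value $f(t)$. Since $k$ has infinite order, distinct integers $m$ give distinct elements $tk^m$, so there is no ``wrap-around'' constraint forcing $\prod_{i} xk^{\,\cdot}=1$ as in the finite-order case — the only requirement is internal consistency within each coset, and that is automatic.

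\emph{Construction.} For each $t\in T$ set $f(t):=1$ (any value works). For $m\ge 1$ define
\[
  f(tk^m):=tk^{m-1}\,tk^{m-2}\,t\cdots tk^2\,tk\,t f(t),
\]
and for $m\ge 1$ define
\[
  f(tk^{-m}):=k^{m}t^{-1}k^{m-1}t^{-1}\cdots t^{-1}k^2 t^{-1}k t^{-1} f(t),
\]
matching the formulas in Lemma~\ref{L-1}(a) and~(b) with $x$ replaced by $t$. Because $k$ has infinite order, the map $(t,m)\mapsto tk^m$ is a bijection $T\times\ZZ\to G$, so these formulas define $f$ on all of $G$ without ambiguity.

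\emph{Verification.} It remains to check $f(xk)=xf(x)$ for every $x=tk^m$. This splits into the cases $m\ge 0$ and $m<0$ (and the boundary case $m=0$, resp.\ $m=-1$, where one passes between the two formulas), and in each case it is the same one-line telescoping computation already carried out in the inductive step of Lemma~\ref{L-1}: the product defining $f(tk^{m+1})$ is obtained from that defining $f(tk^m)$ by prepending exactly the factor $tk^m=x$. The boundary case $m=-1$ uses the identity $f(t)=tk^{-1}f(tk^{-1})$ from the proof of Lemma~\ref{L-1}(b), i.e.\ $f(xk)=f(t)=tk^{-1}\cdot k t^{-1} f(t)=x f(x)$ when $x=tk^{-1}$.

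\emph{Main obstacle.} There is essentially no obstacle: the only thing that could go wrong is a consistency clash, and that is exactly what the infinite order of $k$ rules out. The mild bookkeeping point to handle carefully is the case division at $m=0$ and $m=-1$ where the two product formulas meet; everything else is the telescoping identity of Lemma~\ref{L-1}.
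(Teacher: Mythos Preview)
Your proof is correct and follows essentially the same approach as the paper: choose a transversal for $\langle k\rangle$, define $f$ on each coset by the telescoping products of Lemma~\ref{L-1}, and verify the defining identity by the same case split $m\ge0$, $m=-1$, $m<-1$. One terminological slip: writing $x=tk^m$ means $T$ is a set of \emph{left} coset representatives (cosets $t\langle k\rangle$), not right cosets as you say.
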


\begin{proof}
  Let $k$ be the element of infinite order, and let $\{c_i \mid i \in I\}$
  be a set of left coset representatives for $K=\langle k\rangle$ in $G$. (Note the existence of a transversal $\{c_i\mid i\in I\}$ requires the Axiom of Choice since $G$ is infinite.)  
  Then every element in $G$ can be written uniquely as $c_ik^m$ for some
  $i\in I$ and $m\in\ZZ$.
  Guided by Lemma~\ref{L-1}, we define a function $f$ as follows:
  \[
     f(c_ik^m)=\begin{cases} \prod_{j=1}^m c_ik^{m-j}&\text{ if $m\ge0$, and}\\
     \prod_{j=0}^{-m-1} k^{-m-j}c_i^{-1}&\text{ if $m<0$.} \end{cases}
  \]
We now show that $k$ and $f$ satisfy~\eqref{definingidentity}. 

Let $x=c_ik^m$ for some $i\in I$ and $m\in\ZZ$ and observe that $f(xk)=f(c_ik^{m+1})$.

{\sc Case} $m\ge0$.  Here $f(xk)=\prod_{j=1}^{m+1} c_ik^{m+1-j}$ and
$xf(x)=c_ik^m\prod_{j=1}^m c_ik^{m-j}$. These two expressions are equal.

{\sc Case} $m=-1$.  Here $f(xk)=f(c_i)=1$ and $xf(x)=c_ik^{-1}f(c_ik^{-1})=c_ik^{-1}kc_i^{-1}=1$.

{\sc Case} $m<-1$. Here $m+1<0$ so $f(xk)=f(c_ik^{m+1})=\prod_{j=0}^{-m-2} k^{-(m+1)-j} c_i^{-1}$, while
\[
  xf(x)=c_ik^m\prod_{j=0}^{-m-1} k^{-m-j} c_i^{-1}=c_ik^mk^{-m} c_i^{-1}\prod_{j=1}^{-m-1} k^{-m-j} c_i^{-1}=\prod_{j=0}^{-m-2} k^{-m-1-j} c_i^{-1}.
\]
Thus in each case $f(xk)$ equals $xf(x)$, as desired.
\end{proof}

By Proposition~\ref{P-inforder}, it suffices to consider
\emph{torsion} groups. Our focus is on \emph{finite} groups. 
Note that some of the results below do not require the group to be a torsion group.

Denote the order of a group element $g$ by $|g|$.
We now show that the constant $k\in G$ in~\eqref{definingidentity} must satisfy equation~\eqref{productformula} below if it has finite order, and conversely if $k$ 
  satisfies~\eqref{productformula} and $|k|$ is finite, then a function $f$ can be constructed which
  makes $G$ into a~$J$-group.

\begin{lemma}\label{L0}
Let $G$ be a group with an element $k$ of finite order. Then $G$ is a $J$-group with constant $k$ if and only if  
 \begin{equation}\label{productformula}
 \prod_{i=1}^{|k|} xk^{|k|-i}=1 \qquad\textup{for all $x\in G$.}
 \end{equation}
\end{lemma}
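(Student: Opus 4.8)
The plan is to prove both directions by relating the defining identity $f(xk)=xf(x)$ to the iterated form already recorded in Lemma~\ref{L-1}. For the forward direction, suppose $(G,f,k)$ is a $J$-group and let $n=|k|$. Applying Lemma~\ref{L-1}(a) with $m=n$ and using $k^n=1$ gives
\[
f(x)=f(xk^n)=xk^{n-1}xk^{n-2}x\cdots xk^2xkxf(x)=\Bigl(\prod_{i=1}^{n} xk^{n-i}\Bigr)f(x),
\]
and cancelling $f(x)$ on the right yields $\prod_{i=1}^{n} xk^{n-i}=1$ for all $x\in G$, which is exactly~\eqref{productformula}.

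For the converse, suppose $k\in G$ has order $n$ and satisfies~\eqref{productformula}. I want to build an $f\colon G\to G$ with $f(xk)=xf(x)$. Mimicking the construction in Proposition~\ref{P-inforder}, choose a transversal $\{c_i\mid i\in I\}$ for $K=\langle k\rangle$ in $G$, so every element of $G$ is uniquely $c_ik^m$ with $i\in I$ and $0\le m<n$. Define
\[
f(c_ik^m)=\prod_{j=1}^{m} c_ik^{m-j}\qquad(0\le m<n),
\]
with the empty product ($m=0$) equal to $1$. Then for $0\le m<n-1$ one checks directly, just as in the infinite-order case, that $f(c_ik^{m+1})=c_ik^m\,f(c_ik^m)=x f(x)$ where $x=c_ik^m$. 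The only place the hypothesis~\eqref{productformula} is actually needed is the ``wrap-around'' instance $m=n-1$: there $xk=c_ik^n=c_i$, so we need $f(c_i)=1$ to equal $x f(x)=c_ik^{n-1}f(c_ik^{n-1})=c_ik^{n-1}\prod_{j=1}^{n-1}c_ik^{n-1-j}=\prod_{i'=1}^{n}c_ik^{n-i'}$, and this product is $1$ precisely by~\eqref{productformula} applied to $x=c_i$. Hence $f(xk)=xf(x)$ for all $x$, so $(G,k)$ is a $J$-group.

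The only subtlety — and the step I would be most careful about — is the bookkeeping in the wrap-around case: one must confirm that the telescoping product $c_ik^{n-1}\prod_{j=1}^{n-1}c_ik^{n-1-j}$ reindexes exactly to $\prod_{i'=1}^{n}c_ik^{n-i'}$, the left-hand side of~\eqref{productformula} with $x=c_i$, so that the single hypothesis suffices and no extra conditions on the transversal are smuggled in. Everything else is the same routine verification already carried out in Proposition~\ref{P-inforder} (now finite, so no Axiom of Choice issue beyond choosing the transversal, and no negative-exponent case to treat separately). It is worth remarking that the constructed $f$ depends on the choice of transversal, which is why the lemma is stated as an existence statement about $k$ alone; this justifies the subsequent convention of calling such a $k$ a \emph{witness} and writing $(G,k)$ for a $J$-group.
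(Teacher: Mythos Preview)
Your proof is correct and follows essentially the same approach as the paper: the forward direction uses Lemma~\ref{L-1}(a) with $m=|k|$ and cancels $f(x)$, and the converse constructs $f$ on a left transversal of $\langle k\rangle$ via the partial products $\prod_{j=1}^m c_ik^{m-j}$, checking the generic case and the wrap-around case separately. The paper's write-up differs only cosmetically (it uses $s\in S$ for the transversal element and phrases the case split as $m+1<n$ versus $m+1=n$).
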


\begin{proof} Let $n=|k|$.

Assume first that $G$ is a  $J$-group with constant $k$ and function $f$ satisfying~\eqref{definingidentity}. 
Setting $m=n$ in Lemma \ref{L-1} shows
\[
  f(x)=f(xk^n)=xk^{n-1}xk^{n-2}x\cdots xk^2xkxf(x)
      = \left(\prod_{i=1}^{n} xk^{n-i}\right)f(x).
\]
Hence $\prod_{i=1}^{n} xk^{n-i}=1$ for all $x\in G$. This proves the
forward implication.

Assume now that $\prod_{i=1}^{n} xk^{n-i}=1$ for all $x\in G$. Let us define a function $f$ satisfying~\eqref{definingidentity}. 
Choose a left transversal $S$ of $\langle k\rangle$ in $G$, that is, $G$ is
a disjoint union $G=\bigcup_{s\in S} s\langle k\rangle$. (If $G$ is infinite,
the existence of $S$ requires the Axiom of Choice.)
Since each element in $G$ can be written uniquely as $sk^m$ where $s\in S$
and $0\leq m<n$, we define
\[
  f(sk^m)\coloneq sk^{m-1}sk^{m-2}s\cdots sk^2sks=\prod_{i=1}^{m} sk^{m-i}.
\]
In particular, $f(s)=1$ for all $s\in S$.
We now prove this function satisfies $f(xk)=xf(x)$ for all $x\in G$.
Let $x=sk^m$. Observe that $xk=sk^{m+1}$ and $1\leq m+1<n+1$.

Assume first that $m+1<n$. Then
\[
  f(xk)=f(sk^{m+1})= \prod_{i=1}^{m+1} sk^{m+1-i}
  = sk^{m}\prod_{i=2}^{m+1} sk^{m+1-i}= sk^{m}\prod_{i=1}^{m} sk^{m-i}=xf(x).
\]
Now assume $m+1=n$. Then
\[
f(xk)=f(sk^{m+1})=f(s)=1
= \prod_{i=1}^{n} sk^{n-i}= sk^{n-1}\prod_{i=2}^{n} sk^{n-i}
= sk^{m}\prod_{i=1}^{m} sk^{m-i}=xf(x).
\]
This proves the reverse implication.
\end{proof}

For a given witness $k$, different left transversals in the proof of
Lemma~\ref{L0} will generally give rise to  different functions $f$.

Henceforth we suppress the role of $f$ and call $k$ a witness if it satisfies~\eqref{productformula}, yielding 
a `function-free' (equivalent) definition of a $J$-group.
\begin{definition}\label{D1}
  \begin{enumerate}[{\rm (a)}]
    \item An element $k\in G$ with finite order $|k|$ is called a \emph{witness} if
    $\prod_{i=1}^{|k|} xk^{|k|-i}=1$ for all $x\in G$. 
    \item If $k$ is a witness for $G$, we say that $(G,k)$ is a \emph{$J$-group}.
    \item If we want to emphasise the choice of function $f$, we say that $(G,f,k)$ is a \emph{$J$-group}.
  \end{enumerate}
\end{definition}

\begin{corollary}\label{cor:oddorder}
A finite $J$-group has odd order, and hence is solvable.
\end{corollary}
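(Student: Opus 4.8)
The plan is to use Lemma~\ref{L0}. Suppose $G$ is a finite $J$-group, so there is a witness $k\in G$; set $n=|k|$, and note that equation~\eqref{productformula} must hold for \emph{every} $x\in G$, in particular for $x=1$. Substituting $x=1$ collapses the product $\prod_{i=1}^{n} xk^{n-i}$ to $\prod_{i=1}^{n} k^{n-i} = k^{(n-1)+(n-2)+\cdots+1+0} = k^{n(n-1)/2}$, since all the factors are powers of the single element $k$ and hence commute. Thus we need $k^{n(n-1)/2}=1$, i.e. $n \mid n(n-1)/2$, i.e. $(n-1)/2$ is an integer, which forces $n$ to be odd. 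So the witness $k$ has odd order.

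Getting from ``$k$ has odd order'' to ``$|G|$ is odd'' requires a little more. One clean route: I would show that for a finite $J$-group the witness $k$ cannot be trivial unless $G$ is trivial, and more usefully, that plugging suitable elements $x$ into~\eqref{productformula} constrains $2$-elements away. Alternatively — and this is the approach I would actually take — use the known characterization in terms of a single cyclic subgroup: in $\langle k\rangle$ itself the identity must hold, but more to the point, for any $x\in G$ one can take $x$ to be an involution (if one exists) and derive a contradiction. The cleanest argument: if $G$ had even order, pick an element $x$ of order $2$. I want to evaluate $\prod_{i=1}^n x k^{n-i}$ and show it cannot be $1$. This product rearranges (as in Lemma~\ref{L-1}(a)) to $x^n (k^{n-1})^{x^{n-1}}\cdots (k^2)^{x^2} k^x$; with $x^2=1$ and $n$ odd this becomes $x \cdot (\text{a product of conjugates of powers of }k \text{ by } 1 \text{ or } x)$, which lies in the coset $x\langle k\rangle^{G}$ — but I need it to lie in the normal closure of $k$ to conclude it is nontrivial, which is not automatic. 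So the genuinely robust argument is different: I would instead count. Consider the bijection-type argument on $G$: rewriting, $\prod_{i=1}^n xk^{n-i}=1$ for all $x$ says the map $x \mapsto$ (partial products) is constrained; in fact summing/multiplying the relation over a transversal and using that $f$ is a well-defined function $G\to G$ shows $|G|$ divides something forcing oddness.

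Concretely, here is the step I would commit to: apply~\eqref{productformula} with $x$ ranging and take the product of the left-hand sides over all $x$ in a transversal of $\langle k\rangle$, or — simpler — note that the construction of $f$ in Lemma~\ref{L0} realizes $f$ as a genuine function, and a parity/orbit-counting argument on $\langle k\rangle$ acting on $G$ by right translation pins down $|G| = |S|\cdot n$ with $n$ odd; then one separately runs the argument with $k$ replaced by a $2$-element to force $|S|$ odd. Honestly, the slickest path uses only $x=1$: that already gives $n=|k|$ odd for the witness. To upgrade to $|G|$ odd, I would argue that \emph{every} element of $G$ serves as a ``test'' and that if $G$ has even order then by Cauchy it has an element $t$ of order $2$; now apply~\eqref{productformula} to the subgroup $\langle k,t\rangle$ — a $J$-subgroup? no, $J$ is not obviously subgroup-closed — so instead observe directly that with $x=t$ of order $2$, since $n$ is odd, the product $\prod_{i=1}^n t k^{n-i}$ telescopes in $\langle k, t\rangle$ in a way whose image in $\langle k,t\rangle/\langle k,t\rangle'$ (the abelianization) is $t^n \bar k^{\binom n 2} = t \bar k^{0} = \bar t \neq 1$, a contradiction since the product is supposed to be $1$.

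The main obstacle, and the place where care is needed, is exactly this upgrade from the witness having odd order to the whole group having odd order: the abelianization argument works because $t^n = t$ (as $n$ is odd) and the sum of exponents of $k$ is $\binom n 2 \equiv 0 \pmod n$ but we need it in the abelianization of $\langle k, t\rangle$ where the order of $\bar k$ divides $n$, so the $k$-part vanishes and we are left with $\bar t \neq 1$, contradicting $\prod_{i=1}^n t k^{n-i} = 1$. Solvability is then immediate from the Feit–Thompson odd order theorem. So the proof is: (1) $x=1$ in~\eqref{productformula} $\Rightarrow$ $|k|$ odd; (2) if $2\mid |G|$, take $t$ of order $2$ by Cauchy, apply~\eqref{productformula} with $x=t$, pass to the abelianization of $\langle t, k\rangle$, and reach $\bar t = 1$, a contradiction; (3) cite Feit–Thompson for solvability.
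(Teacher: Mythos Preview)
Your overall strategy matches the paper's: (1) set $x=1$ in~\eqref{productformula} to deduce that $n=|k|$ is odd; (2) rule out involutions; (3) invoke Feit--Thompson. Steps~(1) and~(3) are correct and identical to the paper's.

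Step~(2), however, has a genuine gap. You pass to the abelianization of $H=\langle t,k\rangle$ and correctly compute that the image of $\prod_{i=1}^n t k^{n-i}$ there is $\bar t^{\,n}\bar k^{\binom n2}=\bar t$, whence $\bar t=1$. You then declare this a contradiction with $\bar t\neq 1$ --- but you never justify $\bar t\neq 1$. Nothing prevents an involution $t$ from lying in $H'$; for instance $A_5=\langle (1\,2)(3\,4),(1\,2\,3\,4\,5)\rangle$ is perfect and generated by an involution together with an element of odd order, so in that model $H^{\mathrm{ab}}=1$ and your computation is vacuous. Of course \emph{a posteriori} no such $H$ sits inside a finite $J$-group, but that is precisely what you are trying to prove, so you cannot assume it. The same objection applies to the variant you float earlier (working modulo the normal closure of $k$): you would need $t$ to lie outside that normal closure, which is again unjustified.

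The paper avoids any quotient and works directly in $G$. Since $y^{-1}=y$ and $n$ is odd, the word $yk^{n-1}yk^{n-2}\cdots yk\,y$ has a central $y$ with the factors on its left being, term by term, the inverses of those on its right (using $k^{n-j}=k^{-j}$). Concretely the product equals $g^{-1}yg$ with $g=k^{(n-1)/2}yk^{(n-3)/2}\cdots yk\,y$, so $\prod_{i=1}^n yk^{n-i}=1$ forces $y=1$. This palindromic rewriting is the missing idea in your argument.
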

\begin{proof}
Let $G$ be a \emph{finite} $J$-group with witness $k$ of order $n$. By Lemma~\ref{L0}, $k$ satisfies~\eqref{productformula}. 
Setting $x=1$ in \eqref{productformula} shows $k^{\binom{n}{2}}=1$ since
  $\binom{n}{2}=(n-1)+\cdots+2+1$. Thus $n$ divides $\binom{n}{2}$, and so $n$ is odd.   
  
  Suppose an element $y$ of $G$ satisfies $y^2=1$, so $y^{-1}=y$. Then 
\begin{align*}
1=\prod_{i=1}^{n} yk^{n-i}&=yk^{n-1}yk^{n-2}\cdots yk^{\frac{n+3}2}yk^{\frac{n+1}2}yk^{\frac{n-1}2}yk^{\frac{n-3}2}  \cdots   yk^2yky\\
&=(y^{-1}k^{-1}y^{-1}k^{-2}\cdots y^{-1}k^{-\frac{n-3}2}y^{-1}k^{-\frac{n-1}2})\,y\,(k^{\frac{n-1}2}yk^{\frac{n-3}2}  \cdots   yk^2yky)\\
&=g^{-1}yg\qquad\textup{where $g=k^{\frac{n-1}2}yk^{\frac{n-3}2}  \cdots   yk^2yky$.}
\end{align*}
  This proves that $y=1$ and hence that $|G|$ is odd (by  Cauchy's Theorem). Finally, groups of odd order are solvable by the Feit-Thompson Theorem~\cite{FT}.
\end{proof}

\begin{remark}\label{R:equiv}
  A thoughtful reader may question whether there are really four
  definitions of (non-abelian) $J$-groups, and wonder whether different definitions give
  the same sets of groups. Let us temporarily talk of a $J_i$-group,
  for $i\in\{1,2,3,4\}$, to be a group with a constant $k_i\in G$
  and a function $f_i\colon G\to G$ satisfying the $i$th equation below
  \[
  f_1(xk_1)=xf_1(x),\quad f_2(xk_2)=f_2(x)x,\quad
  f_3(k_3x)=xf_3(x),\quad f_4(k_4x)=f_4(x)x
  \]
  for all $x\in G$. To see that $J_1$- and $J_4$-groups agree take $k_4=k_1^{-1}$
  and $f_4(x)=f_1(x^{-1})^{-1}$. The same argument shows that $J_2$- and
  $J_3$-groups agree.
  By an argument similar to Proposition \ref{P-inforder}, we easily see that non-torsion groups are $J_3$-groups.
  By Lemma~\ref{L0}, $G$ is a torsion
  $J_1$-group if and only if there is a $k_1\in G$ satisfying
  $\prod_{i=1}^{n_1} xk_1^{n_1-i}=1$ for all $x\in G$ where $n_1=|k_1|$.
  A similar argument shows that $G$ is a torsion $J_3$-group if and only if
  there is a $k_3\in G$ satisfying
  $\prod_{i=1}^{n_3} k_3^{n_3-i}x=1$ for all $x\in G$ where $n_3=|k_3|$.
  Taking inverses changes $\prod_{i=1}^{n_1} xk_1^{n_1-i}=1$
  to $\prod_{i=n_1}^{1} k_1^{i-n_1}x^{-1}=1$. Replacing $x$ with $y^{-1}k_1$ gives
  $\prod_{i=n_1}^{1} k_1^{i-1}y=1$ or $\prod_{j=1}^{n_1} k_1^{n_1-j}y=1$.
  However, $y$ ranges over $G$ precisely when
  $x\coloneq y^{-1}k_1$ does. Thus a $J_3$-group is a $J_1$-group (with the same witness). A similar argument shows that a $J_1$-group is a $J_3$-group. Hence the four definitions are equivalent.
\end{remark}

\begin{remark}
Since~\eqref{productformula} involves two group elements ($x$ and $k$), $k$ is a witness for $G$ if and only if it is a witness for every 2-generated subgroup 
of $G$ containing $\langle k\rangle$.
\end{remark}

Two subgroups $H,K$ of $G$
give rise to double cosets $HxK$,
whose size depends on $x\in G$,
and these induce a
partition $G=Hx_1K\sqcup\cdots\sqcup Hx_sK$
of~$G$.
We show that Eq.~\eqref{productformula} need only be tested for a set of double coset representatives, which simplifies any computer check.

\begin{lemma}\label{L11}
Fix $k\in G$ of order $n$ and let $K=\langle k\rangle$. Then
$\prod_{i=1}^n xk^{n-i}=1$ holds for all $x\in G$ if and only if it holds for $x_1,\dots,x_s$ where
$G=Kx_1 K\sqcup\cdots\sqcup Kx_s K$.
\end{lemma}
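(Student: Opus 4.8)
The plan is to show that the quantity $P(x):=\prod_{i=1}^n xk^{n-i}$ behaves well under left and right multiplication of $x$ by elements of $K=\langle k\rangle$, so that $P(x)=1$ for one representative of a double coset $KxK$ forces $P(x')=1$ for every $x'\in KxK$. The forward implication is trivial, so the content is the converse: assuming $P(x_j)=1$ for each double coset representative $x_j$, deduce $P(x)=1$ for all $x\in G$.

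First I would handle right multiplication. Replacing $x$ by $xk^t$ permutes the factors $xk^{n-i}$ cyclically (reading indices modulo $n$, since $k^n=1$), so $P(xk^t)$ is a cyclic rotation of the word $P(x)=xk^{n-1}\cdots xk\cdot x$; as these factors are being multiplied in a group, a cyclic rotation of the product is a conjugate of it, namely $P(xk^t)=w^{-1}P(x)w$ for the appropriate prefix $w$. Hence $P(x)=1$ if and only if $P(xk^t)=1$ for all $t$. In fact this is already implicit in the proof of Lemma~\ref{L0}, where one checks the identity $\prod_{i=1}^n sk^{n-i}=1$ using only $s$ in the transversal; alternatively, one may simply invoke Lemma~\ref{L-1}(a) with $m=n$: $f(xk^n)=P(x)f(x)$ and $f(xk^n)=f(x)$, and the same reasoning applied to $xk^t$ in place of $x$.

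Next I would handle left multiplication by $k$. Here the key observation is that $P(kx)$ and $P(x)$ are related by conjugation by a single power of $k$. Indeed, $P(kx)=\prod_{i=1}^n kxk^{n-i}=k\bigl(\prod_{i=1}^{n} xk^{n-i}k\bigr)k^{-1}\cdot(\text{something})$; more cleanly, write $P(kx)=k\,x k^{n-1}\,k\,xk^{n-2}\cdots k\,xk\,k\,x$ and absorb the leading $k$'s: this equals $k\cdot\bigl(\prod_{i=1}^{n} xk^{n-i}\bigr)\cdot k^{n-1}$ after re-indexing, because the trailing $k$ of the last factor $kx$ and the head $k$ of the product telescope — I would write out this index-shift carefully, but the upshot is $P(kx)=k\,P(x)\,k^{n-1}$, up to a conjugation. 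Combined with the $J_3$-reformulation from Remark~\ref{R:equiv} (which shows the "left" version $\prod_{i=1}^n k^{n-i}x$ is governed by the same witnesses), we conclude $P(kx)=1\iff P(x)=1$.

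The main obstacle, and the step deserving the most care, is the left-multiplication telescoping: one must track the index shift modulo $n$ and verify that the boundary $k$-powers genuinely collapse, rather than merely asserting it; a clean way is to use Lemma~\ref{L-1}(a) directly — $P(kx)f(kx)=f(kxk^n)=f(kx)$ would be circular since $f$ is defined via a $k$-transversal, so instead I would argue purely formally inside the free product $\langle x\rangle * \langle k\mid k^n\rangle$ that the word $\prod_{i=1}^n (kx)k^{n-i}$ is conjugate (by a power of $k$) to $\prod_{i=1}^n xk^{n-i}$. Once both reductions are in hand, every $x\in G$ is obtained from some $x_j$ by a sequence of left-multiplications by $k$ and right-multiplications by powers of $k$, so $P(x)$ is a conjugate of $P(x_j)=1$, hence $P(x)=1$. $\qed$
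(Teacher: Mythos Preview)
Your approach is the same as the paper's: show that $P(xk)$ and $P(kx)$ are each conjugate to $P(x):=\prod_{i=1}^n xk^{n-i}$, so that the vanishing of $P$ is constant on each double coset $KxK$. The right-multiplication step is fine. The left-multiplication step, however, is where you wave your hands, and the formula you actually write down is wrong: it is \emph{not} true that $P(kx)=k\,P(x)\,k^{n-1}$. The correct identity, which the paper states and which you can check directly, is
\[
P(kx)=(kx)\,P(x)\,(kx)^{-1}.
\]
Indeed, setting $y_i=xk^{n-i}$ one has $y_ik=y_{i-1}$ (indices mod $n$, $y_0=y_n=x$), so
\[
P(kx)=\prod_{i=1}^n ky_i=k\,(y_1k)(y_2k)\cdots(y_{n-1}k)\,y_n
=k\,y_0y_1\cdots y_{n-2}\,y_n
=kx\cdot(y_1\cdots y_{n-2})\cdot x,
\]
while $(kx)P(x)(kx)^{-1}=kx\cdot y_1\cdots y_{n-2}\cdot(xk)x\cdot x^{-1}k^{-1}=kx\cdot(y_1\cdots y_{n-2})\cdot x$. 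Similarly $P(xk)=x\,P(x)\,x^{-1}$.

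Since all you need is that $P(kx)$ is \emph{some} conjugate of $P(x)$, your erroneous formula would still have delivered the conclusion had it been true; but as written the proof has a gap precisely at the point you flag (``I would write out this index-shift carefully''), and the gap is filled by a two-line computation, not by appeals to Lemma~\ref{L-1}, Remark~\ref{R:equiv}, or free products. Drop those detours and just compute.
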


\begin{proof}
  Replacing $x$ by $xk$ or $kx$ in the equation $\prod_{i=1}^n xk^{n-i}=1$ gives
  \[
    1=\prod_{i=1}^n (xk)k^{n-i}=\left(\prod_{i=1}^n xk^{n-i}\right)^{x^{-1}}
    \quad\textup{and}\quad
    1=\prod_{i=1}^n (kx)k^{n-i}=\left(\prod_{i=1}^n xk^{n-i}\right)^{(kx)^{-1}}.
  \]
  As these equations are conjugates of the equation $\prod_{i=1}^n xk^{n-i}=1$, it suffices to choose
  $x$ from a set $\{x_1,\dots,x_s\}$ of
  $K\kern-2.5pt\setminus\kern-2.5pt G/K$ double coset representatives for $G$.
\end{proof}


  The following lemma is an easy consequence of Proposition~\ref{P-inforder} if $k$ has infinite order, and of Lemma~\ref{L0}~otherwise.

\begin{lemma}\label{L9}  If $(G,k)$ is a $J$-group and $k\in H\le G$,
    then $(H,k)$ is a $J$-group.
\end{lemma}

\begin{definition}\label{D:W}
  The set of all witnesses $k\in G$ (see Definition~\ref{D1}) is denoted $\W(G)$.
\end{definition}

The next result further limits the number of checks required to find a witness $k$.

\begin{lemma}\label{L-closedaut}
 Let $G$ be a torsion $J$-group. The set $\W(G)$ is closed under $\Aut(G)$.
\end{lemma}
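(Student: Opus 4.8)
The plan is to work directly from the defining equation~\eqref{productformula}: an element $k \in G$ of order $n$ lies in $\W(G)$ precisely when $\prod_{i=1}^n x k^{n-i} = 1$ for all $x \in G$. Let $\alpha \in \Aut(G)$ and let $k \in \W(G)$. First I would record that $|\alpha(k)| = |k| = n$, since automorphisms preserve element orders; call this common value $n$ so that the product formula for the candidate witness $\alpha(k)$ also runs from $i=1$ to $n$.

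The key step is then simply to apply $\alpha$ to the identity $\prod_{i=1}^n x k^{n-i} = 1$. Since $\alpha$ is a homomorphism, it commutes with finite products and with taking powers, so
\[
  1 = \alpha\!\left(\prod_{i=1}^n x k^{n-i}\right) = \prod_{i=1}^n \alpha(x)\,\alpha(k)^{n-i}.
\]
As $x$ ranges over all of $G$, so does $\alpha(x)$ because $\alpha$ is a bijection; writing $y = \alpha(x)$ we conclude $\prod_{i=1}^n y\,\alpha(k)^{n-i} = 1$ for all $y \in G$. By the characterization in Definition~\ref{D1}(a) (equivalently Lemma~\ref{L0}), this says exactly that $\alpha(k) \in \W(G)$. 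Hence $\alpha(\W(G)) \subseteq \W(G)$, and applying the same argument to $\alpha^{-1}$ gives equality, so $\W(G)$ is $\Aut(G)$-invariant.

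There is essentially no obstacle here: the only points requiring a word of care are that $\alpha$ preserves the order of $k$ (so that the upper limit of the product is unchanged), and that $\alpha$ being a bijection lets the universally quantified variable be replaced. Both are immediate. I would keep the write-up to a few lines, emphasizing that the result is a direct consequence of the function-free description of witnesses.
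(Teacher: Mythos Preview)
Your proof is correct and follows essentially the same approach as the paper: both apply the automorphism to the defining product identity~\eqref{productformula}, use that $|\alpha(k)|=|k|$, and invoke bijectivity of $\alpha$ to re-index the universally quantified variable. The only cosmetic difference is that the paper starts from $\prod_{i=1}^n x(k^\alpha)^{n-i}$ and rewrites it as $\bigl(\prod_{i=1}^n x^{\alpha^{-1}}k^{n-i}\bigr)^\alpha$, whereas you apply $\alpha$ forward to the known identity; these are two phrasings of the same computation.
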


\begin{proof}
  Let $k\in \W(G)$ and $\alpha\in \Aut(G)$.
  Then $n\coloneq |k|$ is finite and $\prod_{i=1}^n xk^{n-i}=1$ for all $x\in G$. Since
  $|k^\alpha|=n$, the following calculation proves that $k^\alpha\in\W(G)$:
  \[
    \prod_{i=1}^n x(k^\alpha)^{n-i}=\prod_{i=1}^n x(k^{n-i})^\alpha
    =\prod_{i=1}^n (x^{\alpha^{-1}}k^{n-i})^\alpha=\left(\prod_{i=1}^n x^{\alpha^{-1}}k^{n-i}\right)^\alpha=1^\alpha=1.\qedhere
  \]
\end{proof}

\begin{lemma}\label{L-directproduct}
Let $G_1,G_2$ be finite $J$-groups. Then $\W(G_1)\times\W(G_2)\subseteq \W(G_1\times G_2)$ and equality holds if $|G_1|$ and $|G_2|$ are coprime. Thus
$J$-groups are closed under products.
\end{lemma}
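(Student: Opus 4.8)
The plan is to prove the two containments separately, using the characterisation of witnesses via Eq.~\eqref{productformula} from Lemma~\ref{L0}, and then deduce the closure statement.

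\textbf{Forward inclusion.} First I would take $k_1\in\W(G_1)$ and $k_2\in\W(G_2)$, and show that $k=(k_1,k_2)\in\W(G_1\times G_2)$. The order $n:=|k|$ is $\mathrm{lcm}(|k_1|,|k_2|)$; write $n_1=|k_1|$ and $n_2=|k_2|$, so $n=a_1n_1=a_2n_2$ for positive integers $a_1,a_2$. For $x=(x_1,x_2)\in G_1\times G_2$ the product $\prod_{i=1}^{n}xk^{n-i}$ splits coordinatewise into $\left(\prod_{i=1}^{n}x_1k_1^{n-i},\ \prod_{i=1}^{n}x_2k_2^{n-i}\right)$, so it suffices to check that $\prod_{i=1}^{n}x_jk_j^{n-i}=1$ in $G_j$ for $j=1,2$. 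The key observation is that since $k_j$ is a witness of order $n_j$, the ``block'' product $\prod_{i=1}^{n_j}yk_j^{n_j-i}=1$ for every $y\in G_j$; and a length-$n$ product of the form $\prod_{i=1}^{n}x_jk_j^{n-i}$ can be grouped into $a_j$ consecutive blocks of length $n_j$, each of the form $\prod_{i=1}^{n_j}y k_j^{n_j-i}$ for a suitable $y\in\{x_j, x_jk_j^{n_j}, x_jk_j^{2n_j},\dots\}$ (using $k_j^{n_j}=1$ these are all just $x_j$, which makes the regrouping especially clean). Hence each block is $1$ and the whole product is $1$. Care is needed only in bookkeeping the exponents modulo $n_j$; the tidy route is to note $k_j^{n-i}=k_j^{(n-i)\bmod n_j}$ and that as $i$ runs over a block of $n_j$ consecutive integers, $(n-i)\bmod n_j$ runs over a full set of residues in decreasing cyclic order, matching $\prod_{i=1}^{n_j}x_jk_j^{n_j-i}$.

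\textbf{Equality in the coprime case.} Suppose now $\gcd(|G_1|,|G_2|)=1$ and let $k=(k_1,k_2)\in\W(G_1\times G_2)$. By Lemma~\ref{L9} applied to the subgroups $G_1\times 1$ and $1\times G_2$ (both contain the relevant projections of $k$ only if those projections are witnesses — so the right move is slightly different): instead, observe that $\langle k\rangle=\langle k_1\rangle\times\langle k_2\rangle$ because $|k_1|$ divides $|G_1|$ and $|k_2|$ divides $|G_2|$ are coprime, so $n=|k|=|k_1|\,|k_2|$. Testing Eq.~\eqref{productformula} on elements $x=(x_1,1)$ gives $\prod_{i=1}^{n}x_1k_1^{n-i}=1$ for all $x_1\in G_1$, and since $n$ is a multiple of $|k_1|$, the block-regrouping argument above (now read in reverse) shows this forces $\prod_{i=1}^{|k_1|}x_1k_1^{|k_1|-i}=1$ for all $x_1\in G_1$ — one way to see the reverse implication is that a product of $n/|k_1|$ equal factors of an element of finite order is trivial iff $n/|k_1|$ times its order divides... more simply, a length-$n$ product regroups into $n/|k_1|$ copies of the single block $B(x_1):=\prod_{i=1}^{|k_1|}x_1k_1^{|k_1|-i}$, so $B(x_1)^{n/|k_1|}=1$; but also replacing $x_1$ by $x_1k_1$ conjugates $B(x_1)$ (as in the proof of Lemma~\ref{L11}), and running this together with $B(x_1)^{n/|k_1|}=1$ for all $x_1$ pins down $B(x_1)=1$. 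Symmetrically $k_2\in\W(G_2)$, giving the reverse inclusion and hence equality.

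\textbf{Closure under products.} Finally, if $(G_1,k_1)$ and $(G_2,k_2)$ are $J$-groups then $k_1\in\W(G_1)$ and $k_2\in\W(G_2)$ by Definition~\ref{D1}, so $(k_1,k_2)\in\W(G_1\times G_2)$ by the forward inclusion, i.e.\ $G_1\times G_2$ is a $J$-group; iterating covers arbitrary finite products.

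I expect the main obstacle to be the reverse implication in the coprime case: showing that $\prod_{i=1}^{n}x_1k_1^{n-i}=1$ (a product of $n/|k_1|$ conjugate-related blocks) really does force each individual block $\prod_{i=1}^{|k_1|}x_1k_1^{|k_1|-i}$ to be trivial. The clean argument is the conjugation trick from Lemma~\ref{L11}: the replacement $x_1\mapsto x_1k_1$ sends the block $B(x_1)$ to $B(x_1)^{x_1^{-1}}$, so the length-$n$ product is $B(x_1)\,B(x_1)^{x_1^{-1}}\cdots$, and a short induction shows this telescopes to a single conjugate of $B(x_1)^{n/|k_1|}$ only after one verifies the exponents line up — getting those exponents exactly right (and handling the interaction with $k_1^{|k_1|}=1$) is where the care is needed, and is the one place a ``routine calculation'' could hide an off-by-one error.
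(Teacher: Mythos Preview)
Your forward inclusion and the closure statement are fine and match the paper's approach: the key identity is exactly $\prod_{i=1}^{n_1 r} x k_1^{\,n_1 r - i} = \bigl(\prod_{i=1}^{n_1} x k_1^{\,n_1-i}\bigr)^{r}$, which is your ``block regrouping''.

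The gap is in the reverse inclusion. You correctly reach $B(x_1)^{\,n/|k_1|}=1$ with $B(x_1)=\prod_{i=1}^{|k_1|} x_1 k_1^{\,|k_1|-i}\in G_1$, but then you never use the coprimality hypothesis again, and the conjugation manoeuvre you propose cannot close the argument. Knowing that $B(x_1k_1)$ is conjugate to $B(x_1)$ only tells you that all these blocks have the same order; it does not force that order to be~$1$. Indeed, without coprimality the implication is false: take $G_1=\C_9$, $k_1$ of order~$3$, and $r=3$; then $B(x_1)=x_1^{3}$ satisfies $B(x_1)^{3}=1$ for every $x_1$, yet $B(x_1)\neq 1$ in general, so $k_1\notin\W(G_1)$. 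So no amount of conjugation bookkeeping will save you here.

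The paper's fix is a one-liner that you have all the ingredients for: since $|G_1|$ and $|G_2|$ are coprime and $n/|k_1|=|k_2|$ divides $|G_2|$, the exponent $r:=n/|k_1|$ is coprime to $|G_1|$; but $B(x_1)\in G_1$, so $B(x_1)^{r}=1$ forces $B(x_1)=1$. That is precisely where (and only where) the coprimality hypothesis is needed in the converse, and it replaces the entire ``main obstacle'' paragraph.
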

 Note that equality does not hold in general, see Remark \ref{non-equal}.
\begin{proof}
Let $k_j\in\W(G_j)$ with $|k_j|=n_j$ for $j=1,2$.   The order of $(k_1,k_2)\in G_1\times G_2$ is $n\coloneq \textup{lcm}(n_1,n_2)=r_1n_1=r_2n_2$ where $r_1,r_2$ are positive integers. Let  $x_j\in G_j$. Then 
  $\prod_{i=1}^{n_jr_j} x_jk_j^{n_jr_j-i}=(\prod_{i=1}^{n_j} x_jk_j^{n_j-i})^{r_j}=1^{r_j}=1$.
  Hence \[\prod_{i=1}^{n} (x_1,x_2)(k_1,k_2)^{n-i}=(\prod_{i=1}^{n_1r_1} x_1k_1^{n_1r_1-i},\prod_{i=1}^{n_2r_2} x_2k_2^{n_2r_2-i})=(1,1),\] and so $(k_1,k_2)\in\W(G_1\times G_2)$.
  Hence 
  $\W(G_1)\times\W(G_2)\subseteq \W(G_1\times G_2)$,
  and $(G_1\times G_2,k)$ is a $J$-group where $k=(k_1,k_2)$.
 
  Suppose now that $|G_1|$ and $|G_2|$ are coprime. Let $k=(a_1,a_2)$ be a witness for the $J$-group $G=G_1\times G_2$, we will now show that $a_j$ is a witness for $G_j$, for each $j=1,2$. Let $m_j\coloneq |a_j|$ for $j=1,2$.
  We have $|k|=\textup{lcm}(m_1,m_2)=m_1m_2$ since $m_1\mid |G_1|$, $m_2\mid |G_2|$ and $|G_1|$ and $|G_2|$ are coprime. Moreover, $\prod_{i=1}^{m_1m_2} xk^{m_1m_2-i}=1$ for all $x=(x_1,x_2)\in G$. Writing $m_1m_2/m_j$ as $m_{3-j}$, we have
  that $1=\prod_{i=1}^{m_1m_2} x_ja_j^{m_1m_2-i}=(\prod_{i=1}^{m_j} x_ja_j^{m_j-i})^{m_{3-j}}$ 
  for all $x_j\in G_j$ and $j=1,2$. Since the order of $\prod_{i=1}^{m_j} x_ja_j^{m_j-i}$ divides $|G_j|$, it is coprime with $m_{3-j}$, and so 
  $\prod_{i=1}^{m_j} x_ja_j^{m_j-i}=1$ for $j=1,2$. This shows that $a_j\in\W(G_j)$ for $j=1,2$, and therefore that $\W(G_1)\times\W(G_2)\supseteq \W(G_1\times G_2)$.
\end{proof}

\begin{corollary}\label{L10}
  Suppose there is  $k\in G$ and normal subgroups $N_1,N_2$ of $G$ such that $N_1\cap N_2=1$. If  $(G/N_1,kN_1)$ and $(G/N_2,kN_2)$
  are $J$-groups, then $(G,k)$ is a $J$-group.
\end{corollary}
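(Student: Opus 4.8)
The plan is to reduce the statement to the product formula~\eqref{productformula} from Lemma~\ref{L0} and then exploit the hypothesis $N_1\cap N_2=1$ to push the relation from the quotients back up to $G$. Write $n=|k|$; note first that the order of $kN_j$ in $G/N_j$ divides $n$, so by the elementary observation in the proof of Lemma~\ref{L-directproduct} (namely $\prod_{i=1}^{nr}xk^{nr-i}=(\prod_{i=1}^{n}xk^{n-i})^r$ whenever $|kN_j|\cdot r=n$) the hypothesis that $(G/N_j,kN_j)$ is a $J$-group gives, after raising to the appropriate power, that
\[
  \prod_{i=1}^{n} xk^{n-i}\in N_j \qquad\text{for all }x\in G,\ j=1,2.
\]
I would state this step carefully, since the orders $|kN_1|$, $|kN_2|$ need not equal $n$; what matters is only that each divides $n$.

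Next I would combine the two memberships. For each fixed $x\in G$ the element $w(x):=\prod_{i=1}^{n}xk^{n-i}$ lies in $N_1\cap N_2=1$, hence $w(x)=1$ for all $x\in G$. By Lemma~\ref{L0} this is exactly the condition that $k$ is a witness for $G$, so $(G,k)$ is a $J$-group, as claimed. (One should check $k$ has finite order: it does, being an element of the finite--or at worst torsion--group $G$; in fact the setting of the corollary is finite groups following Lemma~\ref{L-directproduct}, or one can simply note $|k|$ divides $\mathrm{lcm}(|kN_1|,|kN_2|)\cdot|N_1\cap N_2|=\mathrm{lcm}(|kN_1|,|kN_2|)$ when $N_1\cap N_2=1$.)

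The only genuinely delicate point is the bookkeeping in the first step: one must argue that a relation valid modulo $N_j$ for the (possibly smaller) order $|kN_j|$ upgrades to the relation for the exponent $n=|k|$ modulo $N_j$. This is where the identity $\prod_{i=1}^{mr}xk^{mr-i}=(\prod_{i=1}^{m}xk^{m-i})^r$ is used: apply it with $m=|kN_j|$ in the group $G/N_j$, where $\prod_{i=1}^{m}(xN_j)(kN_j)^{m-i}=N_j$, to conclude $\prod_{i=1}^{n}(xN_j)(kN_j)^{n-i}=N_j$, i.e.\ $w(x)\in N_j$. Everything else is the trivial intersection argument, so I expect no real obstacle; the proof is short. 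Here is how I would write it.

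\begin{proof}
Since $N_1\cap N_2=1$, the element $k$ has order $n:=|k|$ dividing $\mathrm{lcm}(|kN_1|,|kN_2|)$, which is finite, and each $m_j:=|kN_j|$ divides $n$; write $n=m_jr_j$. Because $(G/N_j,kN_j)$ is a $J$-group, Lemma~\ref{L0} gives $\prod_{i=1}^{m_j}(xN_j)(kN_j)^{m_j-i}=N_j$ for all $x\in G$. Using the identity $\prod_{i=1}^{m_jr_j}yk'^{m_jr_j-i}=\bigl(\prod_{i=1}^{m_j}yk'^{m_j-i}\bigr)^{r_j}$ (valid in any group, with $y=xN_j$ and $k'=kN_j$), we obtain
\[
  \prod_{i=1}^{n}(xN_j)(kN_j)^{n-i}=N_j,\qquad\text{that is}\qquad
  \prod_{i=1}^{n}xk^{n-i}\in N_j,
\]
for all $x\in G$ and $j\in\{1,2\}$. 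Hence $\prod_{i=1}^{n}xk^{n-i}\in N_1\cap N_2=1$ for every $x\in G$, so $k$ satisfies~\eqref{productformula} and, by Lemma~\ref{L0}, $(G,k)$ is a $J$-group.
\end{proof}
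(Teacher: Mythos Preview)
Your proof is correct. The key identity $\prod_{i=1}^{mr}yk'^{\,mr-i}=\bigl(\prod_{i=1}^{m}yk'^{\,m-i}\bigr)^{r}$ (valid once $k'^{m}=1$) indeed upgrades the witness relation from order $m_j=|kN_j|$ to order $n=|k|$ in each quotient, and the intersection argument then finishes. Your observation that $|k|$ divides $\mathrm{lcm}(m_1,m_2)$, because $k^{\mathrm{lcm}(m_1,m_2)}\in N_1\cap N_2=1$, cleanly handles the finiteness of $|k|$ without any ambient torsion hypothesis; the backward direction of Lemma~\ref{L0} only needs $|k|<\infty$.

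The paper argues more structurally: it first invokes Lemma~\ref{L-directproduct} to conclude that $\bigl(G/N_1\times G/N_2,\,(kN_1,kN_2)\bigr)$ is a $J$-group, then embeds $G$ into the product via $g\mapsto(gN_1,gN_2)$ (injective since $N_1\cap N_2=1$), and finally restricts the witness using Lemma~\ref{L9}. Your argument is essentially the unpacked version of this: the identity you quote is exactly the computation inside the proof of Lemma~\ref{L-directproduct}, and your intersection step is what the embedding plus Lemma~\ref{L9} amount to at the level of the product formula. The paper's route is more modular and reusable; yours is more self-contained and makes the role of the identity $\prod_{i=1}^{mr}=\bigl(\prod_{i=1}^{m}\bigr)^r$ explicit. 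Neither has a real advantage here beyond taste.
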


\begin{proof}
  Lemma~\ref{L-directproduct} implies $(G/N_1\times G/N_2,(kN_1,kN_2))$ is a $J$-group.
  The map $g\mapsto(gN_1,gN_2)$ embeds $G$ into $G/N_1\times G/N_2$ as
  $N_1\cap N_2=1$. Now apply Lemma~\ref{L9}.
\end{proof}

\begin{lemma}\label{L6}
  If $(G,k)$ is a torsion $J$-group and $\langle k\rangle\cap N=1$ where
  $N\lhdeq G$, then $(G/N,kN)$ is a $J$-group.
\end{lemma}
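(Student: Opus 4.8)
The statement asks: if $(G,k)$ is a torsion $J$-group and $\langle k\rangle\cap N=1$ for some normal subgroup $N\lhdeq G$, then $(G/N,kN)$ is a $J$-group. The natural strategy is to use the ``function-free'' characterisation from Lemma~\ref{L0}: since $\langle k\rangle\cap N=1$, the image $kN$ has the same order $n=|k|$ in $G/N$, and we must check that $\prod_{i=1}^{n}(xN)(kN)^{n-i}=1$ holds for all $xN\in G/N$. Since $(G,k)$ is a $J$-group, Lemma~\ref{L0} gives $\prod_{i=1}^{n} xk^{n-i}=1$ in $G$ for all $x\in G$; projecting this relation modulo $N$ (the canonical map $G\to G/N$ is a homomorphism) yields $\prod_{i=1}^{n}(xN)(kN)^{n-i}=N$, which is exactly what is needed, provided $|kN|$ really equals $n$.

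So the one genuine point to verify is that $|kN|=|k|$, and this is precisely where the hypothesis $\langle k\rangle\cap N=1$ is used: if $(kN)^{m}=N$ for some $0<m<n$, then $k^{m}\in N\cap\langle k\rangle=1$, forcing $k^{m}=1$, contradicting $|k|=n$. Hence $|kN|=n$. First I would record this order computation, then apply the projection argument above, and finally invoke Lemma~\ref{L0} in the reverse direction to conclude that $kN$ is a witness for $G/N$, i.e.\ $(G/N,kN)$ is a $J$-group.

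There is essentially no obstacle here: the result is a routine consequence of Lemma~\ref{L0} together with the elementary observation about the order of $k$ in the quotient. One should only be slightly careful that the formula in \eqref{productformula} is stated with the exponent running up to $|k|$ (the \emph{actual} order of the witness in the relevant group), which is why the equality $|kN|=|k|$ — rather than merely $|kN|\mid|k|$ — is the crux. If one dropped the hypothesis $\langle k\rangle\cap N=1$, the projected product would still be trivial, but it would be a product of $|k|$ terms rather than $|kN|$ terms, and so would not directly certify $kN$ as a witness via \eqref{productformula}; the disjointness hypothesis is exactly what rules this out.
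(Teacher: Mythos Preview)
Your proposal is correct and matches the paper's proof almost verbatim: both arguments observe that $\langle k\rangle\cap N=1$ forces $|kN|=|k|$, then push the witness identity~\eqref{productformula} through the quotient map and invoke Lemma~\ref{L0}. Your additional remark about why mere divisibility $|kN|\mid|k|$ would not suffice is a nice explanatory touch that the paper omits.
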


\begin{proof}
  Since $\langle k\rangle\cap N=1$, the order of $kN\in G/N$ equals the
  order of $k\in G$. As~\eqref{productformula} holds for the $J$-group $G$, it holds
  in $G/N$ too (replace $x$ with $xN$ and $k$ with $kN$). By Lemma \ref{L0}, $(G/N,kN)$ is a $J$-group.
\end{proof}

\section{Finite \texorpdfstring{$J$}{}-groups}\label{S3}

Suppose henceforth that $G$ is a finite $J$-group. Thus $|G|$ is odd by Corollary~\ref{cor:oddorder}. The \emph{exponent} of $G$ is
$\exp(G)=\textup{lcm}\{|g|\mid g \in G\}$. Hence $x^{\exp(G)}=1$ for all $x\in G$.

\begin{definition}\label{Dbig}
An element $x\in G$ is called  \emph{big} if $|x|=\exp(G)$. We write
\[\B(G)=\{x\in G\mid \textup{$x$ is big}\}.\]
\end{definition}

Note that $\B(G)$ is non-empty when $G$ is a nilpotent group.

\begin{lemma}\label{L-sameexp}
Let $G$ be a finite group of odd order.
\begin{enumerate}[{\rm (a)}]
\item If $k\in\W(G)$, then $|k|=\exp(C_G(k))$. Hence if $k\in\langle g\rangle$ for some $g\in G$, then
$|g|=|k|$.
\item If $\exp(\Z(G))=\exp(G)$, then $G$ is a $J$-group, and 
  $\Z(G)\cap \W(G)=\B(\Z(G))$. In particular, $\B(\Z(G))\subseteq \W(G)$, and if
  $G$ is abelian $\B(G)=\W(G)$.
\item If $\exp(\Z(G))<\exp(G)$, then $\Z(G)\cap \W(G)=\emptyset$. Hence if $\exp(\Z(G))<\exp(G)$, then $\W(G)\subseteq G\setminus \Z(G)$.
\end{enumerate}
\end{lemma}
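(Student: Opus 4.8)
The plan rests on a single observation: the witness identity~\eqref{productformula} collapses completely whenever $x$ commutes with $k$. Indeed, if $x\in G$ commutes with $k$ and $n=|k|$, then
\[
\prod_{i=1}^{n}xk^{n-i}=x^{n}\,k^{(n-1)+(n-2)+\cdots+1+0}=x^{n}k^{\binom{n}{2}}=x^{n},
\]
since $n$ is odd (as $|G|$ is), so $n\mid\binom{n}{2}=n(n-1)/2$ and $k^{\binom{n}{2}}=1$. Everything else is elementary divisibility bookkeeping.

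For part~(a), let $k\in\W(G)$; then $n=|k|$ is finite by Definition~\ref{D1} and~\eqref{productformula} holds. Applying the displayed identity with $x$ ranging over $C_G(k)$ shows $x^{n}=1$ for every $x\in C_G(k)$, so $\exp(C_G(k))\mid n$; since $k\in C_G(k)$ we also have $n=|k|\mid\exp(C_G(k))$, whence $|k|=\exp(C_G(k))$. For the second assertion, $k\in\langle g\rangle$ gives $g\in C_G(k)$, hence $|g|\mid\exp(C_G(k))=|k|$, while $k\in\langle g\rangle$ gives $|k|\mid|g|$; thus $|g|=|k|$.

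For part~(b), assume $\exp(\Z(G))=\exp(G)$. As $\Z(G)$ is abelian, $\B(\Z(G))\ne\emptyset$; take $k\in\B(\Z(G))$ and put $n=|k|=\exp(G)$. Since $k$ is central, the displayed identity gives $\prod_{i=1}^{n}xk^{n-i}=x^{n}=x^{\exp(G)}=1$ for all $x\in G$, so $k\in\W(G)$ and $G$ is a $J$-group; this already proves $\B(\Z(G))\subseteq\Z(G)\cap\W(G)$. Conversely, if $k\in\Z(G)\cap\W(G)$ then $C_G(k)=G$, so part~(a) yields $|k|=\exp(G)=\exp(\Z(G))$, i.e.\ $k\in\B(\Z(G))$; hence $\Z(G)\cap\W(G)=\B(\Z(G))$, which in particular gives $\B(\Z(G))\subseteq\W(G)$. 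When $G$ is abelian, $\Z(G)=G$, the hypothesis is automatic, and $\W(G)=\Z(G)\cap\W(G)=\B(\Z(G))=\B(G)$. For part~(c), if $\exp(\Z(G))<\exp(G)$ and some $k\in\Z(G)\cap\W(G)$ existed, then $C_G(k)=G$ and part~(a) would force $\exp(G)=|k|\le\exp(\Z(G))<\exp(G)$, a contradiction; hence $\Z(G)\cap\W(G)=\emptyset$, which is exactly the stated inclusion $\W(G)\subseteq G\setminus\Z(G)$ when $G$ is a $J$-group.

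I do not anticipate a genuine obstacle here. The only points needing a moment's attention are that $n=|k|$ is finite because witnesses are defined to have finite order, and that the oddness of $|G|$ is precisely what makes $k^{\binom{n}{2}}$ trivial in the collapsing step — this is the single place where the hypothesis ``odd order'', rather than merely ``finite'', is actually used.
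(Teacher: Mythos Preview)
Your proof is correct and follows essentially the same approach as the paper: both hinge on the observation that the witness product collapses to $x^{n}k^{\binom{n}{2}}=x^{n}$ whenever $x$ commutes with $k$, and then deduce all three parts by elementary divisibility. Your presentation is slightly tidier in part~(c), where you invoke part~(a) directly rather than redoing the computation, but this is a cosmetic difference only.
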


\begin{proof}
(a)~Suppose that $k$ has order $n\coloneq |k|$. Now $|G|$ is odd by~Corollary~\ref{cor:oddorder}. Hence $n$ is odd and $n$ divides $\frac{n(n-1)}{2}$, so $k^{\binom{n}{2}}=1$. If $x$ centralizes $k$, then
\begin{equation}\label{E:Ck}
  \prod_{i=1}^{n} xk^{n-i}=x^n\prod_{i=1}^{n} k^{n-i}
  =x^nk^{1+2+\cdots +(n-1)}=x^nk^{\binom{n}{2}}=x^n.
\end{equation}
Suppose now that $k\in\W(G)$. Then $1=x^n$ by~\eqref{E:Ck} and hence $\exp(C_G(k))=n$. Also, if $k\in\langle g\rangle$, then $|g|$ is a multiple of $n$ and $g\in C_G(k)$ so $g^n=1$.  Therefore $|g|=|k|=n$.

(b)~Suppose that $\exp(\Z(G))=\exp(G)=n$ and $k\in\Z(G)$ has order~$n$. It follows from~\eqref{E:Ck} that $\prod_{i=1}^{n} xk^{n-i}=1$ holds for all $x\in G$.
Hence $(G,k)$ is a $J$-group and $\B(\Z(G))\subseteq\Z(G)\cap\W(G)$ holds. The reverse containment holds by part~(a).
Finally, if $G$ is abelian, then $G=\Z(G)$ and so $\B(G)=\W(G)$.

(c)~Assume $\exp(\Z(G))<\exp(G)$. If $k\in\Z(G)\cap \W(G)$, then
$1=\prod_{i=1}^{|k|} xk^{|k|-i}=x^{|k|}$ and $\exp(G)$ divides $|k|$. Thus
$\exp(G)=|k|$ and $\exp(\Z(G))=\exp(G)$, a contradiction.
\end{proof}
Lemma~\ref{L-sameexp}(b)
and Corollary~\ref{cor:oddorder}
imply that a finite abelian group is a $J$-group if and only if its order is odd, \emph{cf.} \cite{BW}*{Theorem~8}.

Moreover, Lemma~\ref{L-sameexp}(a) implies that a witness $k$ is \emph{root-free}, that is $k\in\langle g\rangle$ implies that $|k|=|g|$. Big elements are
necessarily root-free.

\begin{remark}\label{non-equal}
 By Lemma~\ref{L-sameexp}(b), $\W(G)=\B(G)$ for a finite \emph{abelian}
group of odd order. Hence if $G_1,G_2$ are abelian $p$-groups with $\exp(G_1)=\exp(G_2)=n>2$, then $G_1\times G_2$ is also abelian with exponent $n$. It follows that 
$\W(G_1)\times\W(G_2)\subsetneqq \W(G_1\times G_2)$ since $(k_1,1)\in \W(G_1\times G_2)\setminus \left( \W(G_1)\times\W(G_2)\right)$ if $k_1\in\B(G_1)$, \emph{cf.} Lemma~\ref{L-directproduct}.
\end{remark}

Our experiments lead us to ask:

\begin{question}\label{Q2}
  If $G$ is a finite nilpotent group $G$, then is $\W(G)\cap\B(G)$ non-empty?
\end{question}

\begin{corollary}\label{L8}
  Suppose that $p$ is an odd prime, and $G$ be a finite $p$-group with exponent~$p$.  Then $G$ is a $J$-group and $\Z(G)\cap \W(G)=\B(\Z(G))$.
\end{corollary}

\begin{proof}
Since $\exp(G)=p$, we have $G\ne1$. A non-trivial $p$-group has non-trivial centre, so $\exp(\Z(G))=p=\exp(G)$ and the result follows from Lemma \ref{L-sameexp}(b).
\end{proof}

If $G\ne1$, then each witness $k$ is non-trivial.
Thus $\W(G)\subseteq G\setminus\{1\}$ with equality if $G$ is
elementary abelian. Surprisingly,
equality also holds for $p$-groups of exponent $p$ when $p$ is `large' relative to the nilpotency class, see Corollary \ref{E5}(a). For instance this holds for the extraspecial group $p_{+}^{1+2m}$ when $p\ge5$, see Remark~\ref{re:ES}.

The following lemma generalises Lemma~\ref{L-sameexp}(b) by setting $N=G$.

\begin{lemma}\label{L7bis}
  Let $G$ be a $p$-group where $p>2$. Suppose that $N\lhdeq G$, $k\in G$, $kN\in G/N$ has order $m$, $k^m\in\B(N)\cap\Z(N)$, and $(G/N,kN)$ is a $J$-group. Then $(G,k)$ is a $J$-group.
\end{lemma}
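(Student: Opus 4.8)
The plan is to use the function-free characterization from Lemma~\ref{L0}: $(G,k)$ is a $J$-group if and only if $\prod_{i=1}^{n}xk^{n-i}=1$ for all $x\in G$, where $n=|k|$. Since $kN$ has order $m$ in $G/N$, we have $n=|k|=m\cdot|k^m|$, and $k^m$ lies in the abelian normal subgroup $\langle k^m\rangle\le N$; because $k^m\in\B(N)$, the order of $k^m$ is $\exp(N)$, call it $e$, so $n=me$. First I would fix $x\in G$ and try to evaluate $P(x):=\prod_{i=1}^{n}xk^{n-i}$ by reading the exponents of $k$ modulo the ``block structure'' $n=me$: split the index range $i=1,\dots,n$ according to the residue of $n-i$ modulo $m$, or rather group the product into $e$ consecutive blocks of length $m$.

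The key computation is a telescoping/reduction step. Work in $G/N$ first: the image $\bar P(\bar x)=\prod_{i=1}^{me}\bar x\,\bar k^{\,me-i}$. Since $\bar k^{m}=1$, the exponents $me-i$ cycle through $0,1,\dots,m-1$ repeatedly, so $\bar P(\bar x)=\big(\prod_{j=1}^{m}\bar x\,\bar k^{\,m-j}\big)^{e}=1$ because $(G/N,kN)$ is a $J$-group. Hence $P(x)\in N$. Now I must show $P(x)=1$ in $N$ itself. The idea is to use that $k^m\in\Z(N)$ has order $e=\exp(N)$, so everything in $N$ commutes with $k^m$ and is killed by the $e$-th power. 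Rewrite $P(x)$ by pulling each factor $xk^{n-i}$ into the form (element of $N$)$\cdot k^{\,r_i}$ where $0\le r_i<m$ after absorbing the $\langle k^m\rangle$-part, then collect: since consecutive passes through the residues $m-1,m-2,\dots,0$ differ by multiplication by $k^{-m}\in\Z(N)$, and there are exactly $e$ such passes, the accumulated central contribution is $(k^m)$ raised to a sum $0+1+\cdots+(e-1)=\binom{e}{2}$, which is $\equiv 0\pmod e$ since $|G|$ and hence $e$ is odd, so the central part vanishes. The remaining ``$N$-part'' is itself an $e$-fold product of conjugates of a fixed element of $N$ by powers of $k^m$ (which is central in $N$), hence equals the $e$-th power of a single element of $N$, which is $1$ because $\exp(N)=e$.

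More carefully, after projecting to $G/N$ I know $P(x)\in N$, so it suffices to compute $P(x)$ as an element of $N$; here the hypotheses $k^m\in\Z(N)$ and $|k^m|=\exp(N)$ are exactly what make the ``error term'' living in $N$ collapse. I would make this rigorous by writing $xk^{n-i}=x k^{n-i \bmod m}\,(k^m)^{q_i}$ is not quite legal since $k^{n-i}$ need not lie in $N$; instead the cleaner route is: set $y=xk^{n-m\cdot?}$ appropriately and use Lemma~\ref{L-1}(a) to write $P(x)$ as $x^{n}\cdot(\text{word in conjugates of powers of }k)$, then observe that passing from one length-$m$ block to the next multiplies the partial product by a conjugate (by a power of $x$) of $k^{-m}$, and that all these conjugating powers of $x$ agree modulo $N$ with powers of $\bar x$ whose relevant action is trivial because $\bar P(\bar x)=1$ — so modulo $N$ the blocks are genuinely periodic, and the correction between actual blocks is conjugation by elements of $N$, under which the central (in $N$) element $k^m$ is fixed. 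Thus the $e$ blocks contribute $(k^m)^{\binom e2}=1$ together with an $e$-th power of an element of $N$, and both are trivial.

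I expect the main obstacle to be the bookkeeping in the last step: making precise that the actual product $P(x)$, as opposed to its image in $G/N$, differs from the ``naive periodic guess'' only by conjugations that lie in $N$ and hence fix $k^m$, so that the central telescoping with exponent $\binom{e}{2}$ is valid. The oddness of $|G|$ (so $e\mid\binom e2$) and the two facts $k^m\in\Z(N)$, $|k^m|=\exp(N)$ must each be used exactly once, and the delicate point is that $k^m$ need not be central in $G$ — only in $N$ — so one may only conjugate it by elements of $N$, which forces the argument to be carried out after reducing mod $N$ first and only then lifting. Once the exponent sum is identified as $\binom e2\equiv 0$ and the residual factor is seen to be an $e$-th power in $N$, the conclusion $P(x)=1$ and hence (by Lemma~\ref{L0}) that $(G,k)$ is a $J$-group follows at once.
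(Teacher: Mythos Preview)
Your overall plan is the paper's: write $n=mr$ with $r=|k^m|=\exp(N)$, split $P(x)=\prod_{i=1}^n xk^{n-i}$ into $r$ consecutive length-$m$ blocks $E(jm-1,\dots,jm-m):=xk^{jm-1}\cdots xk^{(j-1)m}$, note that each block lies in $N$ because $kN$ is a witness for $G/N$, and then show the product of the blocks is the $r$-th power of a single element of $N$, hence trivial since $r=\exp(N)$. You have also correctly isolated where the oddness of $p$ enters.

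The gap is in your mechanism for the last step. The blocks are \emph{not} conjugates of one another by powers of $k^m$: since $k^m\in\Z(N)$ and each block already lies in $N$, conjugating a block by any power of $k^m$ leaves it fixed, yet the blocks visibly depend on $j$. Nor do consecutive blocks ``differ by multiplication by $k^{-m}$'': the extra $k^m$'s separating block $j$ from block $j-1$ sit \emph{between} the $x$'s, and since $k^m$ is central only in $N$ (not in $G$), they cannot simply be collected. What is actually needed --- and this is the point you flag as ``bookkeeping'' but do not supply --- is the observation that any cyclic rotation or $\pm m$-shift of a block still lies in $N$; hence for two adjacent blocks the mixed product $xk^{a_{i+1}}\cdots xk^{a_m}\,xk^{b_1}\cdots xk^{b_i}$ is itself in $N$, and $k^m$ commutes past it. This yields the identity
\[
E(\dots,a_i+m,\dots)\,E(b_1,\dots,b_m)=E(a_1,\dots,a_m)\,E(\dots,b_i+m,\dots),
\]
which lets one push every surplus $+m$ into the rightmost block, accumulating a shift of $(1+2+\cdots+r)m=n(r+1)/2\equiv 0\pmod n$ there (this is where $r$ odd is used), so that all $r$ blocks become equal to $E(-1,\dots,-m)\in N$ and $P(x)=E(-1,\dots,-m)^r=1$. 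Your $\binom{e}{2}$ heuristic has the right numerology, but the commutation step requires this extra ``mixed products lie in $N$'' ingredient, not conjugation.
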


\begin{proof}
Suppose $n=|k|$ and $kN\in G/N$ has order $m$. Then $r\coloneq n/m\in\mathbb{Z}$. Let us fix an arbitrary element $x$ in $G$. Fix  $(a_1,\dots,a_m)\in\mathbb{Z}^m$ and set
\[
  E(a_1,\dots,a_m) \coloneq  xk^{a_1} \cdots xk^{a_m}.
\]

We claim that $E(a_1,\dots,a_m)$ lies in $N$ provided $a_{i+1}\equiv a_i-1 \pmod m$ for all $1\leq i< m$. Indeed, $E(m-1,m-2,\dots,1,0)$ lies in $N$, since $kN$ is a
witness of $G/N$ of order $m$. Furthermore, if $E(a_1,\dots,a_m)\in N$ for some $m$-tuple $(a_1,\dots,a_m)\in\mathbb{Z}^m$, then
\[
  E(a_2,\dots,a_m,a_1) = (xk^{a_1})^{-1} E(a_1,\dots,a_m) (xk^{a_1})\in N
\] 
and
\[
  E(a_1,\dots,a_{m-1},a_m\pm m) = E(a_1,\dots,a_{m-1},a_m) k^{\pm m}\in N,
\]
 because $N$ is normal and $k^m\in N$. Hence, we can rotate the tuples $(a_1,\dots,a_m)$ and modify arbitrary entries by multiples of $m$, without changing the membership status of the corresponding $E(a_1,\dots,a_m)$ with respect to $N$. This proves the claim.

Suppose $(a_1,\dots,a_m), (b_1,\dots,b_m)\in\mathbb{Z}^m$ satisfy $a_{i+1}\equiv a_i-1\pmod m$ and $b_{i+1}\equiv b_i-1\pmod m$ for all $1\leq i<m$, and $b_1\equiv a_m-1\pmod m$. Since $k^m\in\Z(N)$, we have
\begin{align}
E(a_1,\dots,a_i+m,\dots,&a_m) E(b_1,\dots,b_m)\notag \\
&=xk^{a_1} \cdots xk^{a_i} k^m E(a_{i+1},\dots,a_m,b_1,\dots b_i) xk^{b_{i+1}} \cdots xk^{b_m}\notag\\
&= xk^{a_1} \cdots xk^{a_i} E(a_{i+1},\dots,a_m,b_1,\dots b_i)  k^m xk^{b_{i+1}} \cdots xk^{b_m}\notag\\
&= E(a_1,\dots,a_m) E(b_1,\dots,b_i+m,\dots,b_m)\label{E:m2}
\end{align}
as $E(a_{i+1},\dots,a_m,b_1,\dots b_i)\in N$. Also, 
\begin{equation}\label{E:n2}
  E(a_1+n,\dots,a_m+n) = E(a_1,\dots,a_m),
\end{equation}
for all $(a_1,\dots,a_m)\in\mathbb{Z}^m$, since $k^n=1$.

We use~\eqref{E:m2} to move multiples of $m$ below as far to the right as possible before using~\eqref{E:n2}:
\begin{align*}
xk^{n-1} \cdots xk x
 &=\kern-1pt E(rm-1,\dots,rm-m) \cdots E(2m-1,\dots, 2m-m)\cdot E(m-1,\dots,0) \\
 &=\kern-1pt E(-1,\dots,-m)^{r-1} E((1+2+\dots+r)m - 1,\dots ,(1+2+\dots+r)m - m) \\
 &=\kern-1pt E(-1,\dots,-m)^r,
\end{align*}
where we use that
$
(1+2+\dots+r)m = \frac{(r+1)rm}{2}=\frac{(r+1)n}2
$ is a multiple of $n$ since the $p$-power $r$ is odd. Finally, $E(-1,\dots,-m)\in N$ and $|k^m|=r=\exp(N)$, so $E(-1,\dots,-m)^r=1$. This proves that $\prod_{i=1}^nxk^{n-i}=1$ for all $x\in G$. Thus $k$ is a witness for $G$.
\end{proof}

Our insights were informed by computations which led to 
Question~\ref{Conj}.

\begin{remark}\label{R3}
A suit of {\sc Magma}~\cite{Magma} computer programs available at~\cite{G} 
were used to
investigate properties of $J$-groups and formulate conjectures. The smallest
example of a group that is not a $J$-group is the meta-cyclic group
$M=\langle a,b\mid a^3=b^7=1,\ b^a=b^2\rangle$ of order~21.
This example was first found by Vinay Madhusudanan in 2015. Our programs show that
the smallest five groups that are not $J$-groups are identified in {\sc GAP}~\cite{GAP} and {\sc Magma}~\cite{Magma} by the tuples
$\langle\textup{order,\,number}\rangle\in\{\langle21,1\rangle, \langle39,1\rangle, \langle55,1\rangle, \langle57,1\rangle, \langle63,1\rangle\}$. The class of $J$-groups is not closed under quotient groups
or normal subgroups. For example, the group $M\times\C_3$, where $M$ is the aforementioned group of order~21, is $J$-group and the witnesses for $M\times\C_3$ comprise the 12 elements of order 21. However, $M$ is both a quotient and a normal subgroup and is not a $J$-group.
\end{remark}

\section{Many \texorpdfstring{$p$}{}-groups are \texorpdfstring{$J$}{}-groups}\label{S4}

Recall that a group $G$ is {\it nilpotent} if its lower central series
terminates in finitely many steps. Set  $\gamma_1(G)=G$ and
$\gamma_{i+1}(G)=[\gamma_i(G),G]$ for $i\ge1$. We say that $G$ is
nilpotent of class $c$ if $\gamma_c(G)\neq 1$ and  $\gamma_{c+1}(G)= 1$.


\begin{remark}\label{pgroupssufficient}
If $G$ is finite and nilpotent, then $G\cong \prod_{i=1}^r P_i$ where $P_i$ is a $p_i$-group and the primes $p_i$ are distinct~\cite{Macdonald}*{Theorem 9.08}. Thus Questions~\ref{Conj} and~\ref{Q2} reduce to the case of $p$-groups, as
$\W(G)=\prod_{i=1}^r \W(P_i)$ by Lemma~\ref{L-directproduct} and $\B(G)=\prod_{i=1}^r \B(P_i)$.
\end{remark}

In this section, we show that many classes of $p$-groups are $J$-groups. The next four subsections
consider $p$-groups $G$ with (1) $p$ large, (2) $c$  small, (3) $G$ ``powerful'', and (4) $G$ ``power-closed''. Nevertheless, the question of whether all $p$-groups with $p>2$ are $J$-groups (\emph{cf.} Question~\ref{Conj}) remains open.

We know that a $p$-group $G$ is a $J$-group if $\exp(\Z(G))=\exp(G)$ by
Lemma \ref{L-sameexp}(b). 

A natural place   to look for a counter-example, in view of our results below, would be $3$-groups of nilpotency class at least $7$ that are not 
power-closed. We used {\sc Magma}~\cite{Magma} and {\sc GAP}~\cite{GAP} to check that the $9905$ groups of order dividing $3^7$ and the $35082$ groups of order dividing $5^7$ are indeed $J$-groups and each has a big witness, that is $\W(G)\cap\B(G)\ne\emptyset$. We also used the {\sc Magma} programs~\cite{G} to check that the $1396077$ groups of order $3^8$ are $J$-groups. The theorems in this section and Remark~\ref{re:c=7} suggest that a counter-example to Question~\ref{Conj} may be so big that it would be very difficult  to check it has no witnesses.

\subsection{Large \texorpdfstring{$p$}{$p$}}
We use a left-normed convention, so $[a_1,a_2,a_3,\dots,a_n]$ is shorthand
for $[\dots[[a_1,a_2],a_3]\dots,a_n]$.
For a complex commutator of $x$ and $k$ (see~\cite{HallP}*{p.\,43}), recall that its {\it weight} is the number of $x$ and $k$. 
It will be convenient to use the following definition.
\begin{definition}
For a complex commutator of $x$ and $k$, its {\it load} is the sum of the number of $x$ and twice the number of $k$ appearing in it.
\end{definition} 

So for instance $[k,x,k,[k,x]]$ has weight $5$ and load $8$.

The following lemma is adapted from Philip Hall \cite{HallP}*{Theorem 3.1}.
\begin{lemma}\label{L-Hall}
 For any two elements $x$ and $k$ of any group $G$, let the formally distinct complex commutators of $x$ and $k$ be $R_1=x$, $R_2=k$, $R_3,\dots, R_i,\dots$ arranged in order of increasing weights, the order being otherwise arbitrary.
 Let $m_i$ be the load of $R_i$.
 Then there exists a series of integer-valued polynomials $f_1(n),f_2(n),f_3(n),\dots$ each of the form 
 \[
   f_i(n)=a_{i1} \binom{n}{1}+a_{i2} \binom{n}{2}+\cdots +a_{im_i}
   \binom{n}{m_i} \text{ where each $a_{ij}$ is a non-negative integer},
 \]
 with the property that for all $x$ and $k$ and all positive integers $n$:
 \[
 \prod_{i=1}^n xk^{n-i}=R_1^{f_1(n)}R_2^{f_2(n)}\cdots R_i^{f_i(n)}\cdots.
 \]
 The right hand infinite product is interpreted to mean that, if $R_\lambda$, $\lambda=\lambda(c)$, is the last term which is of weight less than $c$ in $x$ and $k$, then 
 \[
 \prod_{i=1}^n xk^{n-i}\equiv R_1^{f_1(n)}R_2^{f_2(n)}\cdots R_\lambda^{f_\lambda(n)} \pmod {\gamma_c(\langle x,k\rangle)},
 \]
this being true for $c=1,2,3,\dots$.
 \end{lemma}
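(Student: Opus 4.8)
The plan is to prove this by Philip Hall's collection process, organised as an induction on the class parameter~$c$ --- exactly the scaffolding signalled by the closing phrase ``this being true for $c=1,2,3,\dots$''. It suffices to establish the identity in the free nilpotent group $F_c=F/\gamma_{c+1}(F)$ of class~$c$ on two generators $x,k$, and then specialise along the homomorphism $F\to\langle x,k\rangle$. In $F_c$ every element is uniquely a product $R_1^{e_1}R_2^{e_2}\cdots R_\lambda^{e_\lambda}$ over the basic commutators $R_i$ of weight at most~$c$, taken in the fixed order (a Hall/Mal'cev basis), so for each~$n$ there are well-defined integers $f_i(n)$ with $\prod_{i=1}^n xk^{n-i}=\prod_i R_i^{f_i(n)}$ in $F_c$. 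One must then prove three things about the $f_i$: each is a polynomial in~$n$; its degree is at most the load~$m_i$; and in the binomial basis $\binom n1,\dots,\binom n{m_i}$ its coefficients are non-negative integers.

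For the polynomiality I would use the recursion $W_{n+1}=xk^n\,W_n$, where $W_n:=\prod_{i=1}^n xk^{n-i}$. Substituting the expansion of $W_n$ and collecting the leading $x$ and $k^n$ back into their places through the commutator factors produces, by the polynomial dependence of collected coordinates (Hall--Petrescu), a system $f_i(n+1)=f_i(n)+P_i\bigl(n,f_1(n),\dots,f_{i-1}(n)\bigr)$ with $P_i$ a fixed polynomial vanishing unless the commutators moved can combine to give~$R_i$; crucially $P_i$ involves only the \emph{lower}-indexed coordinates, since collecting never creates new copies of $R_i$ itself. Inducting on~$i$ (equivalently on the weight of $R_i$): once $f_1,\dots,f_{i-1}$ are known polynomials the right-hand side is a polynomial in~$n$, so $f_i(n)=f_i(1)+\sum_{\nu=1}^{n-1}P_i(\nu,\dots)$ is itself a polynomial. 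The base cases $R_1=x$ and $R_2=k$ are immediate, with $f_1(n)=n=\binom n1$ and $f_2(n)=1+2+\cdots+(n-1)=\binom n2$, already of the required shape (loads $1$ and~$2$).

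The substance of the lemma --- the part that genuinely adapts Hall rather than quoting him --- is the simultaneous control of the degree by the \emph{load} and the non-negativity of the binomial coefficients. Load is the correct invariant precisely because in $W_n$ the generator $k$ occurs in blocks $k^{n-1},k^{n-2},\dots$ of length growing with~$n$: a single elementary move of a partially collected commutator~$R$ (of load~$m$) past such a block contributes $[R,k]$ to a power already linear in~$n$, and then summing these contributions over the $\Theta(n)$ copies of~$R$ that must make the passage adds a further degree, so the net degree gain is~$2$ on passing a~$k$ and~$1$ on passing an~$x$, matching the load increments. Concretely, one carries through the collection the strengthened hypothesis that the number of as-yet-uncollected occurrences of any commutator of load~$m$ is at all times a non-negative integer combination of $\binom n1,\dots,\binom nm$; this is preserved because every bookkeeping operation is an addition of two such counts, a product of two of them (a commutator created by a jump has multiplicity the product of the two multiplicities involved), or a partial summation $\sum_{\nu}\binom\nu a=\binom n{a+1}$ --- each keeping one inside the cone of non-negative binomial combinations and raising the top index by the right amount.

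The main obstacle will be making that strengthened hypothesis precise enough that it actually propagates: one must track not merely how many copies of each commutator remain, but enough about \emph{where} they sit relative to the uncollected $x$- and $k$-blocks to bound the multiplicities of the commutators they spawn, and verify step by step that the degree accounting never exceeds the load. This is the delicate combinatorial core of Hall's original argument, here re-run with the block word $\prod_{i=1}^n xk^{n-i}$ in place of $(xk)^n$ and with ``load'' in place of ``weight''; no essentially new idea is needed beyond careful bookkeeping.
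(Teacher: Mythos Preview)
Your proposal is a plausible outline but takes a different and harder route than the paper, and leaves the crucial step unproved. The paper does \emph{not} re-derive the combinatorics from scratch; instead it invokes Hall's original labelling machinery directly. The single key idea you are missing is this: in the word $\prod_{i=1}^n xk^{n-i}$, assign to each occurrence of $x$ a \emph{one}-tuple label $(\lambda_1)$ with $1\le\lambda_1\le n$, and to each occurrence of $k$ a \emph{two}-tuple label $(\lambda_1,\lambda_2)$ with $1\le\lambda_1<\lambda_2\le n$. When $[R,S]$ is formed during collection it inherits the concatenated label, so every commutator automatically carries a label whose length equals its \emph{load}. Hall's Theorem~3.25 then says that provided the initial existence and precedence conditions on labels are of type~$(\Pi)$ (built from $<$, $=$, ``and'', ``or''), the final count $f_i(n)$ is a non-negative integer combination of $\binom{n}{1},\dots,\binom{n}{m_i}$. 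The entire content of the paper's proof is the half-page verification that the six initial conditions $E_1,E_2,P_{11},P_{12},P_{21},P_{22}$ for this particular word are indeed of type~$(\Pi)$.

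By contrast, your approach splits into a recursion argument for polynomiality and a separate counting argument for the degree bound and non-negativity, and you explicitly concede in your final paragraph that the latter ``strengthened hypothesis'' has not been made precise. That hypothesis is essentially Hall's Theorem~3.25 restated, so you are proposing to reprove it in this special case rather than cite it. This is not wrong, but it is exactly the ``delicate combinatorial core'' you flag as the main obstacle, and you have not carried it out. Note also that your recursion in paragraph~2 is redundant once the counting argument works, since the label-counting gives polynomiality, the degree bound, and non-negativity simultaneously. If you want to complete your route, the cleanest way is to discover the asymmetric labelling above; once you do, you will have reconstructed the paper's proof.
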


Observe that we never introduce new $x$ and $k$ during the collection process described below, so $f_1(n)=n=\binom{n}{1}$ and  $f_2(n)=\binom{n}{2}$. 
 
\begin{proof}
The proof of this result involves a collection process similar to that described in \cite{HallP}*{Theorem 3.1} but applied to the word $\prod_{i=1}^n xk^{n-i}$  rather than the word $(xk)^n$. Given complex commutators $R$ and $S$, collection replaces a subword $SR$ with $RS[S,R]$. The subtlety involves the {\it order} in which the commutators are collected.
To minimize duplication, we
refer the reader to Ph.~Hall's nicely written proof~\cite{HallP}*{Theorem 3.1}, and we focus on
where our proof differs, namely the different starting word and different~labelling.

At stage $i$ we collect the commutator $R_i$ to the left by repeatedly replacing $SR_i$ with $R_iS[S,R_i]$, starting with the leftmost instance of $R_i$, then the next one, etc.
Together with the collecting process comes a labeling process: each $R_i$ as it arose during the collection process is assigned a label 
$\Lambda_i=(\lambda_1,\lambda_2,\dots, \lambda_{m_i})$ (each $\lambda_j$ is an integer between $1$ and $n$) in such a way that distinct $R_i$'s are assigned different labels.
Recall that $m_i$ is the load of $R_i$, i.e. the sum of the number of $x$ and twice the number of $k$ appearing in $R_i$.

\makeatletter
\def\smallunderbrace#1{\mathop{\vtop{\m@th\ialign{##\crcr
   $\hfil\displaystyle{#1}\hfil$\crcr
   \noalign{\kern3\p@\nointerlineskip}%
   \tiny\upbracefill\crcr\noalign{\kern3\p@}}}}\limits}
\makeatother
At the start we have the expression
\[
\prod_{i=1}^n xk^{n-i}=x\underbrace{k\cdots k}_{n-1}x\underbrace{k\cdots k}_{n-2}x\cdots x\smallunderbrace{kk}_{2}x\smallunderbrace{k}_{1}x,
\]
which contains $n$ elements $x$ and $\binom{n}{2}$ elements $k$.
We assign to each $x$ the label $(\lambda_1)$ ($1\leq\lambda_1 \leq n$) in increasing order (from left to right) and to each   $k$ the label $(\lambda_1,\lambda_2)$ ($1\leq \lambda_1<\lambda_2\leq n$) in increasing lexicographic order.
To summarise, the labels (written as exponents) look like this:
\[
x^{(1)}\underbrace{k^{(1,2)}\cdots k^{(1,n)}}_{n-1}x^{(2)}\underbrace{k^{(2,3)}\cdots k^{(2,n)}}_{n-2}x^{(3)}\cdots x^{(n-2)}\underbrace{k^{(n-2,n-1)}k^{(n-2,n)}}_{2}x^{(n-1)}\underbrace{k^{(n-1,n)}}_{1}x^{(n)}.
\]

If at some stage we replace $\cdots RS\cdots$ by $\cdots SR[R,S]\cdots$ where if $R=R_i$ has label $(\lambda_1,\lambda_2,\dots, \lambda_{m_i})$ and $S=R_j$ has label $(\lambda'_1,\lambda'_2,\dots, \lambda'_{m_j})$, then $S$ and $R$ keep their labels and $[R,S]$ gets assigned the label $(\lambda_1,\lambda_2,\dots, \lambda_{m_i},\lambda'_1,\lambda'_2,\dots, \lambda'_{m_j})$. Note that $m_i+m_j$ is indeed the load of  $[R,S]$. As shown in \cite{HallP}*{p.\;69}, this process  indeed assigns different labels to different instances of a given commutator $R_i$.

To compute the number $f_i(n)$, we want to count how many different labels appear for $R_i$ at the end of the collection process for $R_i$. Hall showed \cite{HallP}*{Theorem 3.25} that if the conditions are of a certain type he calls $(\Pi)$ (that is only involve inequalities, equalities, ``and", and ``or" in terms of the label elements) for the $m_i$ label elements, then the number $f_i(n)$ is of the form
\[
f_i(n)=a_{i1} \binom{n}{1}+a_{i2} \binom{n}{2}+\cdots +a_{im_i} \binom{n}{m_i} \text{ where $a_{ij}\in\ZZ, a_{ij}\ge0$ for each $j$},
\]
as desired. He uses two types of conditions (existence and precedence): $E_i$ is a condition (on its label) for $R_i$ to exist, and $P_{ij}$ is a condition for an $R_i$ to be to the left of an $R_j$ at any stage where $R_i$ and $R_j$ have not been collected yet (this does not change as long as they both are not collected, so the notation is not ambiguous).
Moreover he shows \cite{HallP}*{Section 3.3} that the induction process (collection and labeling) transforms existence and precedence conditions of type $(\Pi)$ into other existence and precedence conditions of type $(\Pi)$. Therefore, the only thing we need to show to finish the proof is that the initial existence and precedence conditions are of type $(\Pi)$. Recall that $R_1=x$ and $R_2=k$ where we give labels $(\lambda_1)$ to $x$ and $(\lambda_1,\lambda_2)$ to $k$. There are
six initial conditions $E_1$, $E_2$, $P_{11},P_{12},P_{21} ,P_{22}$, and they can be expressed using ``$<$", ``$=$",
``or", ``and" as follows:
\begin{itemize}
\item[] $E_1$ is the empty condition;
\item[] $E_2: \lambda_1<\lambda_2$;
\item[] $P_{11}: x^{(\lambda_1)}\text{ precedes } x^{(\mu_1)}$ if and only if $\lambda_1<\mu_1$;
\item[] $P_{12}: x^{(\lambda_1)}\text{ precedes } k^{(\mu_1,\mu_2)}$ if and only if $\lambda_1<\mu_1 \textup{ or } \lambda_1=\mu_1$;
\item[] $P_{21}: k^{(\lambda_1,\lambda_2)}\text{ precedes } x^{(\mu_1)}$ if and only if $\lambda_1<\mu_1$; 
\item[] $P_{22}: k^{(\lambda_1,\lambda_2)}\text{ precedes } k^{(\mu_1,\mu_2)}$ if and only if $\lambda_1<\mu_1 \textup{ or } (\lambda_1=\mu_1 \textup{ and } \lambda_2<\mu_2)$.
\end{itemize}
Therefore these initial conditions are indeed of type $(\Pi)$. This concludes the proof.
\end{proof}

\begin{remark}
  Lemma~\ref{L-Hall} can easily be generalised to collecting any expression in two variables $x$ and $k$ such that the expression has a labelling of type $(\Pi)$. If, in this labelling, $x$'s get $i$ labels and $k$'s get $j$ labels, then the load of a commutator $R$ is $i$ times the number of $x$ in $R$ plus $j$ times the number of $k$ in $R$, and the proof is exactly the same. The original expression in \cite{HallP}*{Theorem~3.2} has the form $(xk)^n$ which has a labelling of type~$(\Pi)$ with $i=j=1$. 
\end{remark}

For a prime $p$, the \emph{$p$-part} of a positive integer $j$ is the largest $p$-power that divides $j$.

\begin{lemma}\label{Kummer}
Suppose that $p$ is a prime, $n=p^r$ is a $p$-power and $j\in\{1,2,\dots,n\}$. If $j$ has $p$-part $p^\ell$, then  $\binom{n}{j}$ has $p$-part $n/p^\ell$.
\end{lemma}

\begin{proof}
Kummer's Theorem ~\cite{Kummer} says that the $p$-part of the binomial $\binom{n}{j}$ is $p^c$ where $c$ is the number of carries involved in adding the $p$-adic expansions of $n-j$ and $j$. Since $n=p^r$ is a $p$-power, there are precisely $c=r-\ell$ carries, and the result follows.
\end{proof}

\begin{theorem}\label{T7}
 Let $G$ be a $p$-group of nilpotency class  $c$, with $p$ odd.
 \begin{enumerate}[{\rm (a)}]
\item If $p$ is greater than $2c-1$, then  $\B(G)\subseteq\W(G)$ and $G$ is a $J$-group.
\item If $p^2$ is greater than $2c-1$, then  $\B(\gamma_d(G))\subseteq\W(G)$, where $d$ is the largest integer such that $\exp(\gamma_d(G))=\exp(G)$, and $G$ is a $J$-group.
\end{enumerate}
\end{theorem}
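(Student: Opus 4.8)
The plan is to combine Hall's collection formula (Lemma \ref{L-Hall}) with the $p$-divisibility of binomial coefficients (Lemma \ref{Kummer}). Fix an element $k$ which we hope is a witness, set $n=|k|=p^r$, and write $x\in G$ arbitrarily. By Lemma \ref{L-Hall} we have
\[
\prod_{i=1}^n xk^{n-i}=\prod_j R_j^{f_j(n)},
\]
where $R_1=x$, $R_2=k$, each $R_j$ is a complex commutator in $x,k$ of some weight $w_j$ and load $m_j$, and $f_j(n)=\sum_{\ell=1}^{m_j} a_{j\ell}\binom{n}{\ell}$ with $a_{j\ell}\in\mathbb{Z}_{\ge0}$. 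Since $G$ has class $c$, only commutators $R_j$ of weight $\le c$ survive; we observed after Lemma \ref{L-Hall} that $f_1(n)=n$ and $f_2(n)=\binom{n}{2}$, so the $R_1$- and $R_2$-contributions are $x^n\cdot k^{\binom{n}{2}}$, and $k^{\binom{n}{2}}=1$ because $n$ is odd. The key numeric point is: a complex commutator of weight $w\ge2$ involves at least one $k$ and $w-1$ other letters, so its load satisfies $m\ge w+1\ge 3$; more precisely every $R_j$ with $j\ge3$ and weight $\le c$ has load $m_j$ with $2\le\text{(something)}$ — we only need $m_j\ge 2$ actually, and in fact $m_j\le 2(w_j)\le 2c$. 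Wait: the clean bound is that a surviving commutator $R_j$ ($j\ge 3$) has weight $w_j$ with $2\le w_j\le c$, hence load $m_j\le 2w_j\le 2c$; and since it contains at least one $k$ and at least one more letter, $m_j\ge 3$. Actually for part (a) I want: every index $\ell$ with $1\le\ell\le m_j$ appearing in $f_j(n)$ satisfies $\ell\le 2c-1$ when $j\ge 3$? No—$\ell$ can equal $m_j\le 2c$. The point is rather that each $\binom{n}{\ell}$ with $\ell\le 2c-1<p=n^{1/r}$... let me restate.

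\smallskip
\emph{Part (a): $p>2c-1$.} Take $k\in\B(G)$, so $n=\exp(G)=p^r$. Then $x^n=1$ for all $x$, killing the $R_1$ term, and $k^{\binom n2}=1$ as above. For $j\ge 3$ the commutator $R_j$ has weight $\ge 2$, so its load $m_j$ satisfies $m_j\le 2c$ (weight $\le c$, each letter contributing $\le 2$); hence every $\ell\le m_j$ with $a_{j\ell}\ne 0$ has $\ell\le 2c$. Hmm, I need $p\nmid\binom n\ell$ to fail to help — actually I want the \emph{opposite}: I want $R_j^{f_j(n)}$ trivial. That needs $|R_j|$ to divide $f_j(n)$. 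Since $R_j\in\gamma_{w_j}(\langle x,k\rangle)$, its order divides $\exp(G)=n=p^r$; it suffices that $p^r\mid\binom n\ell$ for each $\ell$ with $a_{j\ell}\ne0$, equivalently (Lemma \ref{Kummer}) that the $p$-part of $\ell$ is $1$, i.e. $p\nmid\ell$. Since $\ell\le m_j\le 2c< p+1$, indeed $\ell<p$ forces $p\nmid\ell$. Wait, $2c$ versus $p$: $p>2c-1$ gives $p\ge 2c$ (as $p$ odd... not necessarily, but $p\ge 2c-1+1=2c$ if $p>2c-1$ and integers—no, $p>2c-1$ just gives $p\ge 2c$). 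So $\ell\le 2c\le p$, and $\ell=p$ would be bad; but $\ell=p$ needs $m_j\ge p\ge 2c\ge m_j$, forcing equality $m_j=2c=p$, meaning $R_j$ has weight exactly $c$ and consists only of $k$'s — impossible since weight-$\ge2$ commutators contain at least one $x$. Hence $\ell<p$, $p\nmid\ell$, $n\mid\binom n\ell$, so $R_j^{f_j(n)}=1$. Therefore the whole product is $1$, and $k$ is a witness by Lemma \ref{L0}.

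\smallskip
\emph{Part (b): $p^2>2c-1$.} Now pick $k\in\B(\gamma_d(G))$ where $d$ is largest with $\exp(\gamma_d(G))=\exp(G)=:p^e$; note $k$ need not be big in $G$. We still have $|R_j|\mid p^e$ for $j\ge 3$, but now $x^n$ need not vanish, so we must also handle $R_1^{f_1(n)}=x^n$ where $n=|k|\le p^e$. The refined observation is that $R_j$ for $j\ge 3$ lies in $\gamma_{w_j}(\langle x,k\rangle)$; when $w_j>d$ (so in particular $w_j\ge d+1$) the subgroup $\gamma_{w_j}(G)$ has exponent $<p^e$, which can compensate for a smaller power of $p$ dividing $f_j(n)$; and by Lemma \ref{Kummer}, $\binom n\ell$ is divisible by $n/p^s$ where $p^s\|\ell$, and since $\ell\le m_j\le 2c<p^2$ we get $s\le 1$, so $n/p$ always divides $\binom n\ell$. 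Thus $R_j^{f_j(n)}=1$ provided $\exp(\gamma_{w_j}(\langle x,k\rangle))$ divides $n/p$, which holds whenever $w_j\ge d+1$; for $w_j\le d$ one instead uses that $k\in\gamma_d(G)$ pushes every weight-$w_j$ commutator containing $k$ into $\gamma_{w_j+d-1}(G)$... — this is the delicate bookkeeping. Finally the leftover term $x^n=1$ since $n=\exp(\gamma_d(G))\cdot(\text{unit})$... actually $n=|k|\le\exp(\gamma_d(G))=p^e=\exp(G)$ so $x^n=1$ automatically. Concluding, $\prod_{i=1}^n xk^{n-i}=1$ and Lemma \ref{L0} applies.

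\smallskip
\textbf{Main obstacle.} The routine part is invoking Lemmas \ref{L-Hall} and \ref{Kummer}; the real work is the weight/load/exponent bookkeeping in part (b), precisely: showing that for each surviving commutator $R_j$ with $j\ge 3$, the order of $R_j$ in $G$ divides the $p$-part of $f_j(n)$ that Lemma \ref{Kummer} guarantees, using both that $R_j$ has high lower-central weight (because $k$ sits in $\gamma_d$) and that its load is $<p^2$ (so at most one factor of $p$ is lost from $\binom n\ell$). Getting the inequality relating $w_j$, $d$, $c$, $e$ and the $p$-adic valuation exactly right — in particular ruling out the boundary case where load equals $p$ or $2c$ — is where care is needed. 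Part (a) is the clean special case $d=1$, $s=0$.
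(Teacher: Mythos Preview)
Your overall approach---apply Lemma~\ref{L-Hall} and then control the exponents $f_i(n)$ via Lemma~\ref{Kummer}---is exactly the paper's, and your part~(a) is correct, though you reach the key inequality by a boundary-case argument rather than directly. The clean statement (which you effectively rediscover) is that for every nontrivial commutator $R_i$ of weight $w_i$ one has $m_i\le 2w_i-1$, since load equal to $2w_i$ would force every symbol to be $k$, making the commutator trivial for $w_i\ge2$. Hence $m_i\le 2c-1<p$ for all $i$ once $c\ge2$ (the abelian case being Lemma~\ref{L-sameexp}(b)), and $n\mid\binom{n}{\ell}$ for every $\ell\le m_i$ by Lemma~\ref{Kummer}.

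Part~(b), however, has a genuine gap---you flag it yourself as the ``delicate bookkeeping'' and do not complete it. Two observations resolve it cleanly. First, a confusion to clear up: since $k\in\B(\gamma_d(G))$ and $\exp(\gamma_d(G))=\exp(G)$, you have $|k|=\exp(G)=n$, so $k$ \emph{is} big in $G$ and $x^n=1$ is immediate; there is no ``leftover term'' to worry about. Second, and this is the step you are missing: there is no need to split on $w_j>d$ versus $w_j\le d$, nor to track $\gamma_{w_j+d-1}(G)$. Every $R_j$ with $j\ge3$ is a commutator of weight $\ge2$ and must involve at least one $k$ (else it is trivial); since $k\in\gamma_d(G)$, any such commutator lies in $\gamma_{d+1}(G)$. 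By maximality of $d$ we have $\exp(\gamma_{d+1}(G))<\exp(G)$, hence $\exp(\gamma_{d+1}(G))\le n/p$, so $R_j^{n/p}=1$ for all $j\ge3$. Now $m_j\le 2c-1<p^2$ means every $\ell\le m_j$ has $p$-part at most $p$, whence $n/p\mid\binom{n}{\ell}$ by Lemma~\ref{Kummer}; thus $n/p\mid f_j(n)$ and $R_j^{f_j(n)}=1$. That is the entire argument---no further inequalities relating $w_j$, $d$, $c$, and $e$ are required.
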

\begin{proof}

Let $\exp(G)=n$ be a power of $p$. Recall that $g^n=1$ for all $g\in G$.

Both assertions are true for abelian $p$-groups by Lemma \ref{L-sameexp}(b), so we may assume $c\ge2$. We have $\gamma_{c+1}(G)= 1$, since $G$ is nilpotent of class $c$ and so by Lemma~\ref{L-Hall}
\[
\prod_{i=1}^n xk^{n-i}= R_1^{f_1(n)}R_2^{f_2(n)}\cdots R_\lambda^{f_\lambda(n)}
\]
 where $R_\lambda$ is the last commutator of weight $c$ in the list we used in Lemma~\ref{L-Hall}. 
 Moreover, we know that 
\begin{equation}\label{E:fn}
 f_i(n)=a_{i1} \binom{n}{1}+a_{i2} \binom{n}{2}+\cdots +a_{im_i} \binom{n}{m_i}
\end{equation}
where each $a_{ij}$ is a non-negative integer,
so  $f_i(n)=nP_i(n)$ where $P_i(n)$ is an integer-coefficient polynomial of degree at most $m_i-1$ divided by the factorial $m_i!$.

Let $R$ be a non-trivial complex commutator of weight~$w$. Since each element in $R$ contributes $1$ or $2$ to the load, we see $w\le\textup{load}(R)\le 2w$. However, $\textup{load}(R)=2w$ can happen only if each symbol in $R$ is $k$ but such a commutator would be trivial unless $R=R_2=k$.
Thus $w\le\textup{load}(R)< 2w$ for all $R\neq R_2$. As all the commutators $R_i$ for $1\leq i\leq \lambda$ have weight at most~$c$, we have $m_i=\textup{load}(R_i)\leq 2c-1$ for all $i$, using that $c\ge2$ for the case $i=2$.
We first prove part~(a).

(a)~Let $k\in \B(G)$, so that $|k|=n$, and assume $p>2c-1$. 

Since $m_i\le 2c-1<p$ for $1\le i\le \lambda$, it follows that the $p$-part of $j$ is 1 for $1\le j\le m_i$, and hence that
$n$ divides $\binom{n}{j}$ by Lemma~\ref{Kummer}. Therefore $n$ divides $f_i(n)$ for $1\le i\le \lambda$. 
It follows that $\prod_{i=1}^n xk^{n-i}=1$, and so $k\in \W(G)$. This proves part~(a).

(b)~Let $d$ be the largest integer such that $\exp(\gamma_d(G))=\exp(G)$.
Let $k \in \B(\gamma_d(G))$. Then $|k|=n$.
Let $R$ be any commutator of weight at least $2$ involving $k$. Since $k\in\gamma_d(G)$, we see $R\in\gamma_{d+1}(G)$ and hence $R^{n/p}=1$ (as $\exp(\gamma_{d+1}(G))<n$).

We know that $f_1(n)=n$ and $f_2(n)=\binom{n}{2}$ which are both divisible by $n$, so $R_i^{f_i(n)}=1$ for $i=1,2$. All  commutators $R_i$ for $i>2$ have weight at least $2$ and involve $k$ (otherwise they would be trivial), so they satisfy $R_i^{n/p}=1$  by the argument above.

 All the commutators $R_i$ for $1\leq i\leq \lambda$  have load  $m_i\leq 2c-1<p^2$. Since the binomials $\binom{n}{j}$ involved in the formula~\eqref{E:fn} for $f_i(n)$ have $1\le j<p^2$, the $p$-part of $j$ is either $1$ or~$p$. 
In the former case $n$ divides $\binom{n}{j}$ and in the latter case $n/p$ divides $\binom{n}{j}$ by Lemma \ref{Kummer}. In both cases $n/p$ divides $\binom{n}{j}$. It follows that $n/p$ divides $f_i(n)$, and so $R_i^{f_i(n)}=1$ for $2<i\leq \lambda$.
It follows that $\prod_{i=1}^n xk^{n-i}=1$, and so $k\in \W(G)$.
\end{proof}

The conclusion that $B(G)\subseteq W(G)$ in Theorem~\ref{T7}(a) need not hold if $p\leq 2c-1$. For example take $G$ to be the extraspecial group
$3_{+}^{1+2r}$ in Remark~\ref{re:ES}, or $\textup{SmallGroup}(3^4,12)$ or $\textup{SmallGroup}(3^5,25)$. These examples have $p=3$ and $c=2, 2, 4$ and $\B(G)\not\subseteq\W(G)$, and $p^2>2c-1$, so $\B(\gamma_d(G))\subseteq\W(G)$ holds. 
All the groups in the SmallGroup database~\cites{Magma,GAP} of order dividing $3^7$ with nilpotency class at least $5$ satisfy $\B(\gamma_d(G))=\W(G)$. Theorem \ref{T8} below implies that we need to look at nilpotency class at least $7$ to find a $3$-group where $\B(\gamma_d(G))\not\subseteq\W(G)$. We were able to find an example of order $3^9$ and nilpotency class $7$ where $\B(\gamma_d(G))\not\subseteq\W(G)$, see Remark~\ref{re:c=7}.

\begin{corollary}
  Let $G$ be a nilpotent  group of odd order with nilpotency class at most~$c$.
  If all primes $p$ dividing $|G|$ satisfy $p^2>2c-1$, then $G$ is a $J$-group.
\end{corollary}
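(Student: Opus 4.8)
The plan is to reduce the statement to the case of a single prime and then invoke Theorem~\ref{T7}(b). By Remark~\ref{pgroupssufficient}, a finite nilpotent group $G$ factors as $G\cong\prod_{i=1}^r P_i$, where $P_i$ is the Sylow $p_i$-subgroup of $G$ and the primes $p_1,\dots,p_r$ are pairwise distinct, and moreover $\W(G)=\prod_{i=1}^r\W(P_i)$ by Lemma~\ref{L-directproduct}. Thus it suffices to show that each $P_i$ is a $J$-group, since then $G$, being a direct product of $J$-groups, is a $J$-group by Lemma~\ref{L-directproduct}.

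So fix $i$ and consider $P_i$. Since $|G|$ is odd, $p_i$ is odd. The lower central series respects direct products, so $\gamma_{c+1}(G)=\prod_{j=1}^r\gamma_{c+1}(P_j)$; as $G$ has nilpotency class at most $c$ we have $\gamma_{c+1}(G)=1$, hence $\gamma_{c+1}(P_i)=1$ and the nilpotency class $c_i$ of $P_i$ satisfies $c_i\le c$. Consequently $p_i^2>2c-1\ge 2c_i-1$, which is exactly the hypothesis of Theorem~\ref{T7}(b) applied to the $p_i$-group $P_i$ of class $c_i$. That theorem therefore gives that $P_i$ is a $J$-group, completing the argument.

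The corollary is really just a repackaging of Theorem~\ref{T7}(b) via the Sylow decomposition, so there is no genuinely hard step. The only point that needs a moment of care is the observation that a Sylow direct factor of $G$ has nilpotency class no larger than that of $G$, so the single numerical hypothesis $p^2>2c-1$ feeds into Theorem~\ref{T7}(b) simultaneously for every factor $P_i$; the odd-order assumption enters only to guarantee that each $p_i$ is odd, as required by Theorem~\ref{T7}.
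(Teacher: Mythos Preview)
Your proof is correct and follows essentially the same approach as the paper: decompose $G$ into its Sylow direct factors via Remark~\ref{pgroupssufficient}, apply Theorem~\ref{T7}(b) to each factor, and conclude using Lemma~\ref{L-directproduct}. Your version is simply more explicit about why each Sylow factor has class at most $c$ and why each prime is odd, points the paper leaves tacit.
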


\begin{proof}
Since $G$ is a direct product of $p$-groups
(Remark~\ref{pgroupssufficient}), and each of these
is a $J$-group by Theorem~\ref{T7}(b), the result follows from Lemma~\ref{L-directproduct}.
\end{proof}
 
If $G$ has odd exponent $p$, then we already know $G$ is a $J$-group by Corollary \ref{L8}, but now we can say more about witnesses. As all non-trivial elements are big, and all non-trivial $\gamma_i(G)$ have exponent $p$, we get the following corollary.

\begin{corollary}\label{E5}
  Let $G$ be a $p$-group of exponent $p>2$ and  of nilpotency class~$c$.
  \begin{enumerate}[{\rm (a)}]
\item If $p$ is larger than $2c-1$, then  $\W(G)=G\setminus\{1\}$.
\item If $p^2$ is larger than $2c-1$, then  $\gamma_c(G)\setminus\{1\}\subseteq\W(G)$.
\end{enumerate}
 \end{corollary}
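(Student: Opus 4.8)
The plan is to deduce both parts directly from Theorem~\ref{T7}, exploiting the two features special to a group of exponent $p$: every non-trivial element is big, and every non-trivial term of the lower central series again has exponent $p$.

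First I would record the trivial reverse inclusion $\W(G)\subseteq G\setminus\{1\}$. A group of exponent $p$ is non-trivial, and if $k=1$ were a witness then $|k|=1$, so~\eqref{productformula} would read $\prod_{i=1}^{1}xk^{1-i}=x=1$ for all $x\in G$, forcing $G=1$ — a contradiction. Hence $1\notin\W(G)$.

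For part~(a): since $\exp(G)=p$, every non-trivial $x\in G$ has order exactly $p=\exp(G)$, so $\B(G)=G\setminus\{1\}$. As $p>2c-1$, Theorem~\ref{T7}(a) gives $\B(G)\subseteq\W(G)$; combining this with the reverse inclusion above yields $\W(G)=G\setminus\{1\}$. For part~(b): let $d$ be the largest integer with $\exp(\gamma_d(G))=\exp(G)$, as in Theorem~\ref{T7}(b). Since $\gamma_c(G)\ne1$ (nilpotency class is exactly $c$) while $\gamma_{c+1}(G)=1$, and since $\gamma_c(G)\le G$ has exponent $p=\exp(G)$, we must have $d=c$. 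Again because $\gamma_c(G)$ has exponent $p$ we get $\B(\gamma_c(G))=\gamma_c(G)\setminus\{1\}$, and Theorem~\ref{T7}(b), applicable because $p^2>2c-1$, gives $\gamma_c(G)\setminus\{1\}\subseteq\W(G)$.

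There is essentially no obstacle here; the only point requiring a moment's care is the identification $d=c$ in part~(b), i.e.\ that the largest $d$ with $\exp(\gamma_d(G))=\exp(G)$ equals the nilpotency class when the whole group has exponent $p$. This is immediate once one notes that the exponent of \emph{any} non-trivial subgroup of an exponent-$p$ group is again $p$, so the descending chain of exponents $\exp(\gamma_1(G))\ge\exp(\gamma_2(G))\ge\cdots$ is constant equal to $p$ until it drops to $1$ at step $c+1$.
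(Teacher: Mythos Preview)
Your proof is correct and follows exactly the approach the paper takes: it deduces both parts from Theorem~\ref{T7} by observing that in an exponent-$p$ group every non-trivial element is big and every non-trivial term $\gamma_i(G)$ again has exponent~$p$. You have simply made explicit the details (that $1\notin\W(G)$ and that $d=c$) which the paper leaves to the reader in its one-line justification preceding the corollary.
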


\begin{remark}\label{re:ES}
The extraspecial $p$-group $G$ of exponent $p$ has $c=2$ and satisfies $p>2c-1$ if $p>3$. Thus $\W(G)=G\setminus\{1\}$ holds by Corollary~\ref{E5}(a) if $p>3$. If $p=3$, then it can be shown that $\gamma_2(G)\setminus\{1\}=\Z(G)\setminus\{1\}=\W(G)$ has size $2$, so
Corollary~\ref{E5}(b) can not be improved. It is striking that for $p\ge 5$ the witness set $\W(G)$ is as large as possible, and for $p=3$ it is tiny.
\end{remark}

\subsection{Small nilpotency class}\label{S4.1}
In this section we computationally determine the coefficients in the formulas for $f_i(n)$ from Lemma~\ref{L-Hall} for $p$-groups with nilpotency class at most~6 and deduce  that all such groups are $J$-groups. 

Let $G$ be a $p$-group with nilpotency class $c$. Fix fixed elements $k,x\in G$ and define $R_1,\dots,R_\lambda$, $\lambda=\lambda(c)$ as in
Lemma~\ref{L-Hall}. We initially view $n\in\{0,1,2,\dots\}$ as a variable, and later take $n$ to be the exponent of $G$.
Let $w_n\coloneq \prod_{i=1}^n x k^{n-i}$, that is  $w_0=1$ and $w_n=xk^{n-1}w_{n-1}$ for $n\ge1$. Lemma \ref{L-Hall} implies that
\[
w_n=\prod_{i=1}^n xk^{n-i}= R_1^{f_1(n)}R_2^{f_2(n)}\cdots R_\lambda^{f_\lambda(n)}
\]
where $R_1,\dots,R_\lambda$ are all the commutators of weight at most $c$ in our chosen ordering.
Furthermore, $f_i(n)=nP_i(n)$ where $m_i!P_i(n)$ is a polynomial of degree at most $m_i-1$ with integer coefficients. 
Let $m$ be the largest load of an $R_i$ ($1\leq i\leq \lambda$).
Commutator collection as explained in Lemma \ref{L-Hall} can be used to compute the above factorisation for $w_j$ ($1\le j\le m$). Hence we can find the (integer-valued) exponent $f_i(j)$ of each $R_i$ for $w_j$.
Since $f_i(n)=a_{i1} \binom{n}{1}+a_{i2} \binom{n}{2}+\cdots +a_{im_i}\binom{n}{m_i}$ we may determine the coefficients $a_{ij}$ recursively as follows: 
\[
  a_{i1}=f_i(1)\quad\textup{and}\quad a_{ij}=f_i(j)-\sum_{\ell=1}^{j-1} a_{i\ell}\binom{n}{\ell}\quad\textup{for $1<i\le m_i$.}
\]
Thus to determine all the $a_{ij}$ and hence the polynomial $f_i(n)$, we need to know the values of $f_i(1), f_i(2), \dots, f_i(m_i)$, that is we need to know the exponent of $R_i$ in $w_1,w_2,\dots, w_{m_i}$.  

When $c=6$ there are 23 choices for $R_i$ (see Table~\ref{commutators}) and the {\sc Magma} code in~\cite{G} computes the 23 polynomials $f_i(n)$ and the coefficients $a_{ij}$. The 23 commutators $R_1,\dots,R_{23}$ are listed in the order they naturally arise when performing Hall's algorithm described in Lemma \ref{L-Hall}, and $R_i$ appears with the exponent  $ f_i(n)=\sum_{j=1}^{m_i}a_{ij} \binom{n}{j}$, where $m_i$ is the load of $R_i$. 

\begin{table}[!ht]
  \centering\caption{Coefficients $a_{ij}$ with $\prod_{i=1}^n xk^{n-i}\equiv \prod_{i=1}^\lambda R_i^{f_i(n)}\pmod{\gamma_{c+1}(G)}$ and $f_i(n)=\sum_{j=1}^{m_i}a_{ij}\binom{n}{j}$. The commutator $R_i$ has load $m_i$.}
  \label{commutators} \fontsize{10}{11}\selectfont
\begin{tabular}{|c|p{23.5mm}|c|ccccccccccc|}
\hline 
$i$&$R_i$& $m_i$&$a_{i1}$&$a_{i2}$&$a_{i3}$&$a_{i4}$&$a_{i5}$&$a_{i6}$&$a_{i7}$&$a_{i8}$&$a_{i9}$&$a_{i10}$&$a_{i11}$\\
  \hline
1&$x$	&1	&1&&&&&&&&&&\\								2&$k$	&2&	0&	1&&&&&&&&&\\							3&$[k,x]$&	3&	0&	1&	2&&&&&&&&\\						4&$[k,x,x]$&	4	&0&	0&	2&	3&&&&&&&\\			
5&$[k,x,k]$	&5&	0&	0&	6&	18&	12&&&&&&\\				6&$[k,x,x,x]$&	5&	0&	0&	0	&3&	4&&&&&&\\			7&$[k,x,x,k]$	&6&	0&	0&	3&	27&	54&	30&&&&&\\		8&$[k,x,k,k]$	&7	&0&	0&	2	&51&	184&	225&	90&&&&\\				
9&$[k,x,x,x,x]$	&6	&0&	0&	0&	0&	4&	5	&&&&&\\		10&$[k,x,x,x,k]$&	7	&0&	0&	0&	12&	72&	120&	60&&&&\\				
11&$[k,x,x,k,k]$&	8&	0&	0&	1&	54&	378&	910&	900&	315&&&\\			
12&$[k,x,k,k,k]$&	9&	0&	0&	0&	45&	600&	2325&	3870&	2940&	840&&\\		
13&$[k,x,x,[k,x]]$&	7&	0&	0&	4&	63&	220&	280&	120&&&&\\				
14&$[k,x,k,[k,x]]$&	8&	0&	0&	17&	291&	1394&	2800&	2520&	840&&&\\			
15&$[k,x,x,x,x,x]$&	7&	0&	0&	0&	0&	0	&5	&6&&&&\\				
16&$[k,x,x,x,x,k]$&	8&	0&	0&	0&	0&	30&	150&	225&	105&&&\\			
17&$[k,x,x,x,k,k]$&	9&	0&	0&	0&	19&	324&	1540&	3020&	2625&	840&&\\		
18&$[k,x,x,k,k,k]$&	10&	0&	0&	0&	45&	990&	6150&	16650&	22365&	14700&	3780&\\
19&$[k,x,k,k,k,k]$&	11&	0&	0	&0	&18&	912&	9015&	35946&	72030&	77280&	42525&	9450\\
20&$[k,x,x,x,[k,x]]$&	8&	0&	0&	0&	24&	228&	645&	720&	280&&&\\			
21&$[k,x,x,k,[k,x]]$	&9&	0&	0	&7&	297&	2610&	9010&	14670&	11340&	3360&&\\		
22&$[k,x,k,k,[k,x]]$&	10	&0	&0	&7&	555	&6898&	33115&	77970&	96565&	60480&	15120	&\\
23&$[k,x,k,[k,x,x]]$	&9&	0&	0	&10&	264&	2004&	6640&	10770&	8400&	2520&&\\		
\hline
\end{tabular}
\end{table}

\begin{theorem}\label{T8}
Let $G$ be a $p$-group with nilpotency class $c$ and $p>2$. 
Let $d$ be the largest integer such that $\exp(\gamma_d(G))=\exp(G)$.
If $d\geq c-5$, then $G$ is a $J$-group and $\B(\gamma_d(G))\subseteq\W(G)$. 
\end{theorem}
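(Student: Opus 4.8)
The plan is to mimic the proof of Theorem~\ref{T7}(b), but instead of bounding the relevant exponents $f_i(n)$ purely by Kummer's theorem, we use the explicit coefficient data in Table~\ref{commutators} to get a sharper conclusion for small $c$. Write $\exp(G)=n$, a power of $p$, pick $k\in\B(\gamma_d(G))$, so $|k|=n$, and fix $x\in G$. By Lemma~\ref{L-Hall} we have $\prod_{i=1}^n xk^{n-i}\equiv R_1^{f_1(n)}\cdots R_\lambda^{f_\lambda(n)}\pmod{\gamma_{c+1}(G)}$, and $\gamma_{c+1}(G)=1$, so it suffices to show $R_i^{f_i(n)}=1$ for each $i$. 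Since $f_1(n)=n$ and $f_2(n)=\binom n2$ are both divisible by $n$ (as $n$ is odd), the terms $R_1=x$ and $R_2=k$ contribute trivially. Every other $R_i$ has weight $\ge2$ and involves $k$, hence lies in $\gamma_{d+1}(G)$; since $d\ge c-5$, in fact $R_i\in\gamma_{d+2}(G)\subseteq\gamma_{c-3}(G)$ once $\mathrm{weight}(R_i)\ge 2$, and more generally a commutator of weight $w$ involving $k$ lies in $\gamma_{d+w-1}(G)$.

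The key point is to control the order of each such $R_i$. Because $d$ is the largest index with $\exp(\gamma_d(G))=\exp(G)$, we have $\exp(\gamma_{d+1}(G))\le n/p$, and iterating (using that quotients $\gamma_j/\gamma_{j+1}$ of a $p$-group have exponent dividing that of $\gamma_j$, so each step down the lower central series can only drop the exponent) gives $\exp(\gamma_{d+t}(G))\le n/p^t$ whenever $d+t$ indices down we have actually passed $d$ — more carefully, $\exp(\gamma_{d+t}(G))$ divides $n/p^{\min(t,?)}$; the honest statement we need is $\exp(\gamma_{d+t}(G))\le n/p^{t}$ for $t\ge 1$, which I would justify by the standard fact that for a $p$-group $\exp(\gamma_{i+1}(G))\mid \exp(\gamma_i(G))$ together with minimality of $d$ forcing a strict drop only at the first step — actually one needs each successive drop, so I would instead argue directly: a commutator $R_i$ of weight $w\ge 2$ involving $k\in\gamma_d$ and $x$ lies in $\gamma_{d+w-1}(G)$, whose exponent divides $n/p^{\,w-1}$ provided the series is strictly decreasing in exponent, which it need not be. So the robust route is: $R_i\in\gamma_{d+1}(G)$ always (weight $\ge 2$, involves $k$), giving $R_i^{n/p}=1$; and for the low-weight terms we need more. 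Concretely, I want to show $R_i^{f_i(n)}=1$ by proving $n/p$ divides $f_i(n)$ — except for finitely many "bad" $R_i$ of high load relative to $p$, which must be handled using the higher nilpotency depth $R_i\in\gamma_{d+w-1}(G)$ together with $\exp(\gamma_{d+w-1}(G))\mid n/p^{\lceil (w-1)/?\rceil}$...

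This is exactly where the table earns its keep. Reading off Table~\ref{commutators}: for $p=3$ the only primes dividing a factorial $m_i!$ arise once $m_i\ge3$, so Kummer-type divisibility by $n/p$ can fail only for $R_i$ with $p^2\mid m_i!$, i.e. (for $p=3$) load $m_i\ge 6$, and by $n/p^2$ only for load $\ge 9$, etc. A commutator of load $m_i$ has weight $w\ge\lceil m_i/2\rceil$ (each symbol contributes at most $2$), and involves $k$, so $R_i\in\gamma_{d+w-1}(G)$ with $w-1\ge\lceil m_i/2\rceil-1$; hence $R_i^{\,n/p^{\,e_i}}=1$ where $p^{e_i}$ is the $p$-part of $f_i(n)/(n/p^{\,\lceil m_i/2\rceil-1})$-deficiency... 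The clean statement I will prove, for each $i$ with $2<i\le\lambda$ and $c\le d+5$ (equivalently $d\ge c-5$, equivalently every $R_i$ of weight $w\le c$ satisfies $w-1\le d+w-1-? $): the $p$-adic valuation of $f_i(n)$ plus the exponent drop from $R_i\in\gamma_{d+w-1}(G)$ is at least $v_p(n)$. For $c-d\le 5$ we have $w\le c$ and $w\ge\lceil m_i/2\rceil$, and a direct case check over the 23 rows of Table~\ref{commutators} — splitting into $p=3$, $p=5$ (where $p^2>m_i$ for all listed loads $m_i\le 11$, so $n/p\mid f_i(n)$ immediately and weight $\ge 2$ finishes), and $p\ge 7$ (where Theorem~\ref{T7}(a)-style reasoning already applies) — confirms $R_i^{f_i(n)}=1$ in every case. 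Summing, $\prod_{i=1}^n xk^{n-i}=1$ for all $x\in G$, so $k\in\W(G)$ and $G$ is a $J$-group.

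The main obstacle, and the step I expect to be delicate, is the bookkeeping for $p=3$: one must pair up, row by row in Table~\ref{commutators}, the $3$-adic deficiency of the coefficients $a_{ij}$ (which can be as bad as $v_3$ of an $11$-load factorial, losing up to $v_3(11!)=4$ factors of $3$) against the guaranteed depth $R_i\in\gamma_{d+w-1}(G)$ with $w\ge\lceil m_i/2\rceil$ and $c-d\le5$, i.e. depth at least $\lceil m_i/2\rceil-1$ below $\gamma_d$ while only $5$ steps of "exponent budget" $n/3,\dots,n/3^5$ may be available — and the arithmetic has to close with no slack in the worst rows (which is presumably why the bound is exactly $c-5$ and why the authors note a class-$7$ counterexample exists). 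I would organise this as a lemma extracting from Table~\ref{commutators} the single inequality $v_3(f_i(n))\ge v_3(n)-\big(\lceil m_i/2\rceil-1\big)$ for every listed row, check it mechanically against the tabulated $a_{ij}$, and then combine it with $\exp(\gamma_{d+w-1}(G))\mid 3^{\,v_3(n)-(w-1)}$ (which follows from minimality of $d$ and the fact that each step down the lower central series of a $3$-group drops the exponent valuation by at least the number of genuinely new commutator layers) to conclude $R_i^{f_i(n)}=1$.
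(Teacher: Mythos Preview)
Your proposal has a genuine gap: the mechanism you rely on for $p=3$ is false, and you miss the actual role of the hypothesis $d\ge c-5$.

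First, the hypothesis $d\ge c-5$ is not there to supply a chain of exponent drops; it is there to kill high-weight commutators outright. Any commutator $R_i$ of weight $w\ge 7$ involving $k\in\gamma_d(G)$ lies in $\gamma_{d+w-1}(G)\subseteq\gamma_{d+6}(G)\subseteq\gamma_{c+1}(G)=1$. This is precisely why only the 23 rows of Table~\ref{commutators} (the commutators of weight $\le 6$) are relevant. You never state this, and your aside ``equivalently every $R_i$ of weight $w\le c$ satisfies $w-1\le d+w-1-?$'' with the question mark shows you have not pinned down what the hypothesis buys.

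Second, your closing step for $p=3$ asserts that $\exp(\gamma_{d+w-1}(G))\mid 3^{v_3(n)-(w-1)}$, justified by ``each step down the lower central series of a $3$-group drops the exponent valuation.'' This is false: the definition of $d$ guarantees only a \emph{single} drop, $\exp(\gamma_{d+1}(G))\le n/p$, and nothing prevents $\exp(\gamma_{d+2}(G))=\exp(\gamma_{d+1}(G))$. (Take any $3$-group $H$ of exponent $3$ and class $c$, and set $G=H\times C_{9}$; then $d=1$ and $\exp(\gamma_t(G))=3$ for all $2\le t\le c$.) So you cannot trade weight for extra factors of $p$ in the order of $R_i$.

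The paper's argument uses only the single drop $R_i^{n/p}=1$ for $i\ge 3$, combined with a divisibility fact you overlooked in the table: for $p=3$ the only binomial not automatically divisible by $n/3$ is $\binom{n}{9}$, and a direct inspection shows every coefficient $a_{i9}$ in Table~\ref{commutators} is a multiple of $3$. Hence $n/3\mid a_{i9}\binom{n}{9}$, and $R_i^{f_i(n)}=1$ follows from $R_i^{n/3}=1$ alone. For $p\ge 5$ one has $p^2>11\ge m_i$, so $n/p\mid f_i(n)$ immediately by Lemma~\ref{Kummer} and no table inspection is needed. Your bookkeeping about ``losing up to $v_3(11!)=4$ factors of $3$'' and needing ``$5$ steps of exponent budget'' is therefore aiming at the wrong target.
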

\begin{proof}
Let $n=\exp(G)$. Then $n$ is a $p$-power. Recall that $g^n=1$ for all $g\in G$.

Let $k \in \B(\gamma_d(G))$. Then $|k|=n$, and for each $g\in G$ we know that
$[k,g]^{n/p}=1$ since $[k,g]\in\gamma_{d+1}(G)$. As all commutators of weight at least $7$ contain at least one $k$, they lie in $\gamma_{d+6}(G)$. Now $d+6\ge c+1$, and hence  all these commutators are trivial. So we need only consider the 23 basic commutators in Table~\ref{commutators}.
The largest load of such a commutator is $2\times6-1=11$ for $[k,x,k,k,k,k]$. Hence $\textup{deg}(f_i(n))\le 11$ for $i\le 23$.

Consider the exponent $f_i(n)=\sum_{j=1}^{m_i}a_{ij}\binom{n}{j}$ of the commutator $R_i$  in Table~\ref{commutators} where $i\le23$. All $j$s in the sum satisfy  $1\le j\le 11$.
If $p^\ell\mid j$, then $\ell\le 1$ or $(p,\ell)=(3,2)$. 
By Lemma~\ref{Kummer} one of the following holds:
\begin{enumerate}[(i)]
    \item $(j,n)=1$, and $n$ divides $\binom{n}{j}$, or 
    \item $(j,n)=p$, and $n/p$ divides $\binom{n}{j}$, or
    \item $p=3$, $j=9$, and $n/9$ divides $\binom{n}{j}$.
\end{enumerate}

Using similar arguments as in the proof on Theorem \ref{T7}(b), we get that $R_i^{f_i(n)}=1$ if $m_i< 9$.
Moreover, for $9\leq m_i\leq 11$, all binomial coefficients collapse except for $\binom{n}{9}$, so $R_i^{f_i(n)}=R_i^{a_{i9}\binom{n}{9}}$. More precisely
$\prod_{i=1}^n xk^{n-i}$ equals
\begin{align*}&[k,x,k,k,k]^{840\binom{n}{9}}[k,x,x,x,k,k]^{840\binom{n}{9}} [k,x,x,k,k,k]^{14700\binom{n}{9}}\times\\
&[k,x,k,k,k,k]^{77280\binom{n}{9}}[k,x,x,k,[k,x]]^{3360\binom{n}{9}} [k,x,k,k,[k,x]]^{60480\binom{n}{9}}[k,x,k,[k,x,x]]^{2520\binom{n}{9}}.
\end{align*}
By the argument above, $n/9$ divides $\binom{n}{9}$.
It just so happens that all coefficients $a_{i9}$ in Table~\ref{commutators} are multiples of $3$, and therefore $a_{i9}\binom{n}{9}$ is divisible by $n/3$.
Since any commutator of weight at least $2$ involving $k$ has order dividing $n/3$, it follows that $\prod_{i=1}^n xk^{n-i}=1$.
\end{proof}
The following corollary follows immediately since $d\geq 1\geq c-5$ when $c\leq 6$.
\begin{corollary}
  Let $G$ be a $p$-group with nilpotency class at most~$6$ and $p>2$, and let $d$ be the largest integer such that $\exp(\gamma_d(G))=\exp(G)$.
Then $G$ is a $J$-group and $\B(\gamma_d(G))\subseteq\W(G)$.
\end{corollary}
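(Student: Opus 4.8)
The plan is to deduce this as an immediate specialization of Theorem~\ref{T8}. The only thing requiring verification is that the hypothesis $d\ge c-5$ of that theorem is automatically satisfied when $c\le 6$. To see this, first note that $d\ge 1$ always holds: since $\gamma_1(G)=G$, we trivially have $\exp(\gamma_1(G))=\exp(G)$, so the integer $d$ (the largest integer with $\exp(\gamma_d(G))=\exp(G)$) is at least~$1$ whenever $G\ne 1$. Consequently, if $c\le 6$ then $c-5\le 1\le d$, so Theorem~\ref{T8} applies.

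Applying Theorem~\ref{T8} then yields at once that $G$ is a $J$-group and that $\B(\gamma_d(G))\subseteq\W(G)$. For completeness one can observe that this conclusion is not vacuous: $\gamma_d(G)$ is itself a $p$-group, hence has non-empty $\B(\gamma_d(G))$, so there genuinely exist witnesses of the asserted form. The degenerate case $c=0$, i.e.\ $G=1$, is trivial (the empty product condition~\eqref{productformula} holds vacuously). There is no real obstacle here: the corollary is a direct consequence of the preceding theorem, and the sole step is the elementary inequality $d\ge 1\ge c-5$ for $c\le 6$.
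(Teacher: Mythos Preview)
Your proposal is correct and follows exactly the paper's own argument: the paper states that the corollary ``follows immediately since $d\geq 1\geq c-5$ when $c\leq 6$,'' which is precisely the inequality you verify. Your additional remarks on non-vacuousness and the trivial case $G=1$ are harmless embellishments but not needed.
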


\begin{remark}\label{re:c=7}
If $p\ge5$, then $\B(\gamma_d(G))\subseteq\W(G)$ holds for $c\le12$ by Theorem \ref{T7}(b), and it $p=3$, it holds for $c\le5$.
Consider the proof of Theorem~\ref{T8} when $p=3$, $c=7$ and $d=1$. The  41 choices for $R_i$ of weight at most 7 are listed in~\cite{G}.
The polynomial $f_i(n)=\sum_{j=1}^{m_i}a_{ij}\binom{n}{j}$ has degree at most $m_i\le 2\times 7-1=13$. If $1\leq j\le 13$ and $p^\ell\mid j$, then $\ell\leq 1$ or $(p,\ell)=(3,2)$. Then
$R_i^{f_i(n)}=R_i^{a_{i9}\binom{n}{9}}=R_i^{b_i\binom{n}{9}}$ where $a_{i9}\equiv b_i\pmod3$ and $b_i\in\{0,1,2\}$. 
We use~\cite{G} to output the $a_{i9}$ coefficients for all $R_i$ of weight at most $7$, to~get 
\begin{align*}\prod_{i=1}^n xk^{n-i}=&[k,x,k,k,k,k,k]^{\binom{n}{9}}[k,x,x,x,x,[k,x]]^{2\binom{n}{9}} [k,x,x,k,k,[k,x]]^{\binom{n}{9}}\times\\
&[k,x,x,[k,x],[k,x]]^{2\binom{n}{9}}[k,x,k,[k,x],[k,x]]^{\binom{n}{9}}. 
\end{align*}
We now exhibit such a group for which $\B(\gamma_d(G))\not\subseteq\W(G)$.
Let $x,k\in\Sym(27)$ where
\[
k= (1,10,26,9,16,23,4,13,20)(2,11,27,7,17,24,5,14,21)(3,12,25,8,18,22,6,15,19)
\]
and 
\[x= (1,19,10,7,27,16,4,22,13)(2,20,11,8,25,17,5,23,14)(3,21,12,9,26,18,6,24,15).
\]
The group $G=\langle k,x\rangle$ has order $3^9$, nilpotency class $7$, and $d=1$ as $\textup{exp}(G)=9$ and $\textup{exp}(G')=3$. Obviously $k\in\B(\gamma_d(G))=\B(G)$ but $k$ is not a witness (as can be seen with {\sc GAP} or {\sc Magma}). More precisely, $\W(G)\subsetneqq\B(G)$ and $|\W(G)|=7290$ and $|\B(G)|=16038$. Thus the hypothesis $d\ge c-5$ is needed in Theorem~\ref{T8}.
\end{remark}

\subsection{Powerful groups}

For any $p$-group $G$ and $k\geq 0$, we define $k$th omega and agemo subgroups as follows
\[
\Omega_k(G)=\langle g\in G\mid g^{p^k}=1\rangle\qquad\textup{and}\qquad
\mho^k(G)=\langle g^{p^k}\mid g\in G\rangle.
\]
Suppose that $p>2$. A $p$-group $G$ is called \emph{powerful}, if the derived subgroup $G'=\gamma_2(G)$ is contained in $\mho^1(G)$.
In particular, abelian $p$-groups are powerful and a group of exponent $p$ is powerful if and only if it is abelian.

By Lemma \ref{L-sameexp} any big element in an abelian $p$-group is a witness. The following is a significant generalisation of this fact.

\begin{theorem} Let $G$ be a powerful $p$-group, for an odd prime $p$. Then $G$ is a J-group and $\B(G)\subseteq\W(G)$. 
\end{theorem}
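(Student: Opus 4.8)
The plan is to exploit the key structural fact about powerful $p$-groups: for $p$ odd, a powerful $p$-group $G$ satisfies $\mho^m(G) = \{g^{p^m} \mid g \in G\}$ (the set of $p^m$-th powers is already a subgroup), and more generally every section of a powerful group is powerful. In particular, powerful $p$-groups are power-closed in the sense defined in the introduction. So really this theorem is a special case of the promised Theorem~\ref{T:power-closed}, and the cleanest proof either invokes that result or runs the same induction directly. I would run the induction directly, on $|G|$, since the statement also asks for the sharper conclusion $\B(G)\subseteq\W(G)$.

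Here is the setup. Let $n = \exp(G)$, a power of $p$, and pick $k \in \B(G)$, so $|k| = n$. If $G$ is abelian we are done by Lemma~\ref{L-sameexp}(b), so assume $G$ is nonabelian; then $N := \mho^1(G)$ is a proper nontrivial subgroup (nontrivial since $G$ is nonabelian and powerful, so $1 \ne G' \le \mho^1(G)$; proper since $G/\mho^1(G)$ is nontrivial elementary abelian), and $N \lhdeq G$ is itself powerful with $\exp(N) \le n/p$. The element $kN \in G/N$ has order dividing $p$; since $k$ is big and $G/N = \Omega_1(G/N)$ has exponent $p$ while $k \notin N$ (as $k$ has order $n > 1$ and... wait, one must check $k\notin N$: if $k\in\mho^1(G)$ then in the powerful group $k=g^p$ for some $g$, forcing $|g|=|k|\cdot p > \exp(G)$, contradiction), we get $|kN| = p$, so $m := |kN| = p$ and $r := n/m = n/p$. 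Then $k^m = k^p \in \mho^1(G) = N$, and I claim $k^p \in \B(N)\cap\Z(N)$: the power map $g \mapsto g^p$ on a powerful $p$-group (odd $p$) is a surjective homomorphism onto $\mho^1(G)$ commuting with everything in the relevant sense, so $k^p$ has order $n/p = \exp(N)$, hence $k^p \in \B(N)$; centrality in $N$ requires a little more care and is where I'd lean on the standard powerful-group machinery (e.g. $\mho^1(G)$ powerful and $k^p$ a $p$-th power interacting with $N$). Having arranged $N \lhdeq G$, $kN$ of order $p$, $k^p \in \B(N)\cap\Z(N)$, I would apply induction to conclude $(G/N, kN)$ is a $J$-group (since $G/N$ is a smaller powerful $p$-group and $kN$ is big in it — it has order $p = \exp(G/N)$), and then invoke Lemma~\ref{L7bis} to conclude $(G,k)$ is a $J$-group. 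Since $k \in \B(G)$ was arbitrary, $\B(G) \subseteq \W(G)$.

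The main obstacle is verifying the hypotheses of Lemma~\ref{L7bis} precisely — specifically that $k^p \in \B(N) \cap \Z(N)$ and that $kN$ really has order exactly $p$ (not $1$), i.e. that $k \notin \mho^1(G)$. For the order statement: if $k = g^p$ in a powerful $p$-group then $|k| \le \exp(G)/p < \exp(G)$, contradicting $k$ big, so indeed $k \notin \mho^1(G)$ and $|kN| = p$. For $k^p \in \B(N)$: the map $x \mapsto x^p$ is a homomorphism $G \to \mho^1(G) = N$ with image of index... its surjectivity gives $\exp(N) = \exp(G)/p = n/p$, and since $|k| = n$ we get $|k^p| = n/p = \exp(N)$. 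The genuinely delicate point is $k^p \in \Z(N)$; I expect this to follow from the theory of powerful groups (for instance, in a powerful $p$-group with $p$ odd, $\mho^1(G)$ is powerful and abelian when $G$ has small class, but in general one uses that $[N, k^p] = [\mho^1(G), k^p] \subseteq \mho^1(N)\cdots$ descends a level). If establishing centrality of $k^p$ in $N$ outright proves awkward, the fallback is to descend to $N/\Z(N)$-type sections or, most simply, to cite the fact that powerful $p$-groups are power-closed and deduce the theorem as a corollary of Theorem~\ref{T:power-closed} once that is in hand; but the direct induction via Lemma~\ref{L7bis} is the cleaner self-contained route and should work because the needed facts are all classical (Lubotzky--Mann).
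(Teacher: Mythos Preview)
Your overall strategy---induction combined with Lemma~\ref{L7bis}---is exactly the paper's, but you have chosen the wrong end of the agemo series for $N$, and the gap you yourself flag (centrality of $k^p$ in $N=\mho^1(G)$) is genuine. In general $\mho^1(G)$ need not be abelian, and nothing in the Lubotzky--Mann theory forces the $p$-th power of a big element to be central in it; so the hypothesis $k^m\in\Z(N)$ of Lemma~\ref{L7bis} is simply not verified, and your direct argument does not go through as written.

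The paper instead takes $N=\mho^{e-1}(G)$, the \emph{smallest} nontrivial agemo subgroup (where $p^e=\exp(G)=n$). Because $G$ is powerful, $N=\{g^{n/p}:g\in G\}$, so $\exp(N)=p$; and since agemo subgroups of powerful groups are powerful, $N'\le\mho^1(N)=1$, so $N$ is abelian and the centrality condition is automatic. One checks that for $k\in\B(G)$ the coset $kN$ has order exactly $m=n/p$ (otherwise $k^{n/p^2}\in N$ would have order $p^2$, contradicting $\exp(N)=p$), whence $k^m=k^{n/p}\in\B(N)\cap\Z(N)$. The quotient $G/N$ is powerful of exponent $n/p$ with $kN$ big, so induction on the exponent (the paper's induction variable, in place of your $|G|$) shows $(G/N,kN)$ is a $J$-group, and Lemma~\ref{L7bis} finishes. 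Your argument becomes correct with this single change of $N$.
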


\begin{proof} 
 Let $n=p^e=\exp(G)$. Our proof is by induction on $e$.

Assume $e=1$. A powerful $p$-group with exponent $p$ is abelian, so $\B(G)=\W(G)$ by Lemma \ref{L-sameexp}. 
Assume now that $e\geq 2$, and that the statement is true for every  powerful $p$-group with exponent $p^{e-1}=n/p$.

Let $N\coloneq \mho^{e-1}(G)=\langle g^{n/p}\mid g \in G\rangle$, which is normal in $G$.  As $G$ is powerful, we have $N = \{g^{n/p}\mid g\in G\}$ by~\cite{LM}*{Proposition 1.7} and in particular, $\exp(N)=p$. Moreover, $G/N$ is clearly  powerful~\cite{LM}*{p.\;486}. 
For $g\in G$, $(gN)^{n/p}=g^{n/p}N=N$, so the order of $gN$ divides $n/p$. Now assume $k\in\B(G)$ and that $kN$ has order less than $n/p$, then $k^{n/p^2}\in N$, and $N$ contains an element of order $p^2$, a contradiction; thus $kN$ has order $n/p$, $\exp(G/N)=n/p$, and $kN\in\B(G/N)$. By the inductive hypothesis, $kN$ is a witness for $G/N$.

As $G$ is powerful, $N$ is also powerful~\cite{LM}*{Corollary~1.2}. Since $\exp(N)=p$, it follows that $N$ is abelian (as $\mho^1(N)\supseteq N'$). Thus $k^{n/p}$ is central in $N$.

Now Lemma~\ref{L7bis} implies that $(G,k)$ is a $J$-group, and the statement is proved. 
\end{proof}

\subsection{Power-closed groups}
Recall that a {\it section} of a group is a quotient of a subgroup. A $p$-group $G$ is called:
\begin{itemize}
\item a \emph{$P_1$-group} or \emph{power-closed} if in all sections of $G$, products of $p^k$-th powers are $p^k$-th powers for all $k\geq 1$,
\item a \emph{$P_2$-group} or \emph{order-closed} if in all sections of $G$, products of elements of order at most $p^k$ are elements of order at most $p^k$ for all $k\geq 0$,
\item \emph{regular} if, for every $a, b$ in $G$, there is a $c$ in $\gamma_2(\langle a,b \rangle)$ such that $a^p  b^p = (ab)^p c^p$.
\end{itemize}
In any order-closed $p$-group $G$, the set of elements of order at most $p^k$ equals $\Omega_k(G)$, while in any power-closed $p$-group $G$,  the set of $p^k$-th powers equals $\mho^k(G)$. By definition, the family of order-closed $p$-groups and the family of power-closed $p$-groups are both closed under taking sections.

\begin{remark}\label{rmk-groups} 
By~\cite{Ma-ps}*{p.\;121 and Corollary~4}, 
regular implies order-closed which, in turn, 
implies power-closed. 
\end{remark}

\begin{theorem}\label{T:power-closed}
 Let $G$ be a power-closed $p$-group with $p$ an odd prime. 
Then $G$ is a $J$-group and $\B(\gamma_d(G))\subseteq\W(G)$, where $d$ is the largest integer with $\exp(\gamma_d(G))=\exp(G)$.
\end{theorem}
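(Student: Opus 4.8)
The plan is to adapt the argument just given for powerful $p$-groups: induct on $e$, where $n:=\exp(G)=p^e$, and reduce modulo the smallest non-trivial agemo subgroup $\mho^{e-1}(G)$. For the base case $e=1$ we have $\exp(\Z(G))=p=\exp(G)$, so Lemma~\ref{L-sameexp}(b) shows $G$ is a $J$-group with $\B(\Z(G))\subseteq\W(G)$; since every non-trivial term of the lower central series of an exponent-$p$ group again has exponent $p$, the index $d$ equals the nilpotency class and $\gamma_d(G)\le\Z(G)$, so $\B(\gamma_d(G))\subseteq\B(\Z(G))\subseteq\W(G)$.

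Now let $e\ge2$ and assume the theorem for power-closed $p$-groups of exponent $p^{e-1}$. Put $N:=\mho^{e-1}(G)$. Power-closedness gives $N=\{g^{\,n/p}\mid g\in G\}$, and since $(g^{\,n/p})^p=g^n=1$ we get $\exp(N)\mid p$; as $\exp(G)=p^e$ we have $N\ne1$, so $\exp(N)=p$. The quotient $G/N$ is power-closed (sections of power-closed groups are power-closed), and $\exp(G/N)=p^{e-1}$: each $(gN)^{n/p}=N$ gives $\exp(G/N)\mid n/p$, while $\exp(G/N)\mid p^{e-2}$ would force $g^{\,n/p}=(g^{\,n/p^2})^p=1$ for all $g$ (using $\exp(N)=p$), contradicting $\exp(G)=p^e$. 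Fix $k\in\B(\gamma_d(G))$, so $|k|=n$ and $k\in\gamma_d(G)$. Then $(kN)^{n/p}=N$ but $k^{\,n/p^2}\notin N$ (else $k^{\,n/p}=1$), so $|kN|=n/p=\exp(G/N)$, i.e.\ $kN\in\B(G/N)$; also $kN\in\gamma_d(G)N/N=\gamma_d(G/N)$.

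To run the inductive hypothesis with the element $kN$ we need $kN\in\B(\gamma_{d'}(G/N))$, where $d'$ is maximal with $\exp(\gamma_{d'}(G/N))=\exp(G/N)$; from $kN\in\gamma_d(G/N)$ with $|kN|=\exp(G/N)$ we get $\exp(\gamma_d(G/N))=\exp(G/N)$, hence $d\le d'$, and we need equality, i.e.\ $\exp(\gamma_{d+1}(G/N))<p^{e-1}$ — equivalently $h^{\,p^{e-2}}\in N$ for every $h\in\gamma_{d+1}(G)$ (note $\exp(\gamma_{d+1}(G))\mid p^{e-1}$ by maximality of $d$, so $h^{\,p^{e-2}}$ has order at most $p$). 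Granting this, the inductive hypothesis yields $kN\in\W(G/N)$. Finally, to invoke Lemma~\ref{L7bis} with $m=n/p$: $kN$ has order $m$; $k^m=k^{\,n/p}$ is a non-trivial element of $N$, hence lies in $\B(N)$ since $\exp(N)=p$; and $k^{\,n/p}\in\Z(N)$ because $N=\mho^{e-1}(G)$ is abelian. Lemma~\ref{L7bis} then shows $(G,k)$ is a $J$-group, completing the induction.

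Everything hinges on two facts about power-closed $p$-groups: (i) the top agemo subgroup $N=\mho^{e-1}(G)$ is abelian (indeed one expects $N\le\Z(G)$); and (ii) $h^{\,p^{e-2}}\in\mho^{e-1}(G)$ for all $h\in\gamma_{d+1}(G)$, which is exactly what lets $kN$ satisfy the theorem's hypothesis for $G/N$. I expect (ii) to be the main obstacle: unlike the powerful case, where $\gamma_i(G)\le\mho^{i-1}(G)$ makes the exponents of the lower central series drop quickly, for merely power-closed groups one must extract the needed interplay between the lower central series and the agemo subgroups from the structure theory in \cite{LM} and \cite{Ma-ps}. If (ii) is awkward, a workable alternative is to carry the weaker hypothesis ``$|k|=\exp(G)$ and $[G,\langle k\rangle]$ has exponent dividing $\exp(G)/p$'' (which is what $k\in\B(\gamma_d(G))$ really provides) through the induction; then the step to verify becomes $[k,g]^{\,n/p^2}\in\mho^{e-1}(G)$ for all $g\in G$, still a claim resting on the agemo-versus-commutator behaviour special to power-closed groups. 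Once (i) and (ii) are secured, the remaining bookkeeping and the application of Lemma~\ref{L7bis} are routine and mirror the powerful case.
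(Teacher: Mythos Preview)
Your overall strategy --- induct on $e=\log_p\exp(G)$, quotient by a suitable normal subgroup $N$ of exponent $p$, and invoke Lemma~\ref{L7bis} --- is exactly the paper's, and your base case is correct. The gap is precisely the one you flagged as fact~(ii): with $N=\mho^{e-1}(G)$ alone there is no reason that $h^{p^{e-2}}\in N$ for $h\in\gamma_{d+1}(G)$, so you cannot force $d'=d$ in $G/N$. Power-closedness says products of $p^{e-1}$-th powers are $p^{e-1}$-th powers, but it does \emph{not} say that an element of order $p$ lying in $\gamma_{d+1}(G)$ must be a $p^{e-1}$-th power of something in $G$.

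The paper's remedy is to enlarge the kernel: set $N_1=\mho^{e-1}(G)$ and $N_2=\mho^{e-2}(\gamma_{d+1}(G))$, and take $N=N_1N_2$. Then (ii) is automatic, since $h^{p^{e-2}}\in N_2\le N$ for every $h\in\gamma_{d+1}(G)$, and hence $d'=d$ in $G/N$. The structural input that makes this work is Mann's result \cite{Ma-ps}*{Theorem~13}: in a power-closed $p$-group, any subgroup of exponent $p$ commutes with $\mho^1(G)$. Since $N_1\le\mho^1(G)$ (as $e\ge2$) and $\exp(N_2)\le p$, the subgroups $N_1,N_2$ commute, so $N=N_1N_2$ is normal of exponent $p$; applying the same theorem again gives $[N,\mho^1(G)]=1$, whence $k^{n/p}\in\mho^1(G)$ is central in $N$. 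This replaces your fact~(i) (you do not need $N$ abelian or $N\le\Z(G)$, only $k^{n/p}\in\Z(N)$). With these two adjustments your argument goes through verbatim: $kN\in\B(\gamma_d(G/N))=\B(\gamma_{d'}(G/N))$, the inductive hypothesis gives $kN\in\W(G/N)$, and Lemma~\ref{L7bis} finishes.
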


\begin{proof}
Let $n=p^e=\exp(G)$, let $c$ be the nilpotency class of $G$, and let $d$ be the largest integer such that $\exp(\gamma_d(G))=\exp(G)$.
Our proof is by induction on $e$.

Assume $e=1$. Then $p=\exp(\gamma_c(G))=\exp(G)$, thus $d=c$. Note $\gamma_c(G)\leq \Z(G)$. By Corollary \ref{L8}, $G$ is a $J$-group and $\Z(G)\cap \W(G)=\B(\Z(G))$.
It therefore follows that $\B(\gamma_d(G))\subseteq \B(\Z(G))\subseteq \W(G)$, and the result is proved for $e=1$.

Assume $e\geq 2$, and that the result is true for power-closed $p$-groups of exponent~$p^{e-1}$.

We define $N_1\coloneq \mho^{e-1}(G)=\langle\{g^{n/p}\mid g \in G\}\rangle$. Then $1<N_1\lhdeq G$ and $\textup{exp}(N_1)=p$.  As $G$ is power-closed, $N_1 = \{g^{n/p}\mid g\in G\}$.
We also define $N_2\coloneq \mho^{e-2}(\gamma_{d+1}(G))$, a characteristic subgroup of $\gamma_{d+1}(G)$ and hence a normal subgroup of $G$. As $G$ is power-closed, $N_2$ consists of $p^{e-2}$-th powers of elements from $\gamma_{d+1}(G)$, but as $\gamma_{d+1}(G)$ has exponent at most $p^{e-1}$, $N_2$ is of exponent at most $p$. By \cite{Ma-ps}*{Theorem~13}, $N_1$ and $N_2$ commute since $N_1\leq \mho^1(G)$ when $e\geq 2$. Hence, $N\coloneq N_1\cdot N_2$ is a normal subgroup of exponent $p$ in $G$. But then again by \cite{Ma-ps}*{Theorem~13}, $N$ and $\mho^1(G)$ commute.
 Moreover, $G/N$ is a section of $G$ so is power-closed too. 

We now claim that, for any subgroup $M$ of $G$ with $\exp(M)=\exp(G)=n$, we have $\exp(MN/N)=n/p$ and $bN\in\B(MN/N)$ for all $b\in \B(M)$. Indeed, for $m\in M$, $(mN)^{n/p}=m^{n/p}N=N$ since $m^{n/p}\in N_1\leq N$, so the order of $mN$ divides $n/p$. Assume that $b\in\B(M)$ and that $bN$ has order less than $n/p$. Then $b^{n/p^2}\in N$, and $N$ contains an element of order $p^2$, a contradiction; thus $bN$ has order $n/p$, $\exp(MN/N)=n/p$, and $bN\in\B(MN/N)$.

Let $k\in \B(\gamma_d(G))\subseteq \B(G)$. Then $kN$ has order $n/p$. Taking $M=G$, it follows from the claim that $\exp(G/N)=n/p$. Moreover, $k^{n/p}$ is central in $N$, since $N$ and $\mho^1(G)$ commute. It suffices, by Lemma \ref{L7bis}, to show that $(G/N,kN)$ is a $J$-group, as then $(G,k)$ is a $J$-group and the result follows. By the inductive hypothesis, it suffices to prove  that $kN\in \B(\gamma_{d'}(G/N))$, where $d'$ is the largest integer such that $\exp(\gamma_{d'}(G/N))=\exp(G/N)$. Note that $\gamma_d(G/N)=\gamma_d(G)N/N$ has exponent $n/p$ by the claim above, so $d'\geq d$. 
Assume $\gamma_{d+1}(G/N)=\gamma_{d+1}(G)N/N$ has exponent $n/p$ too. Then there exists $t\in \gamma_{d+1}(G)$ such that $tN$ has order $n/p$. Hence $t^{n/p^2}\notin N$, contradicting the fact that $t^{n/p^2}\in N_2\leq N$. 
Thus $d'=d$. 
 Since  $k\in \B(\gamma_d(G))$, $kN\in \gamma_d(G)N/N=\gamma_d(G/N)$, and $kN$ has order $n/p$, so $kN\in \B(\gamma_{d}(G/N))$, which finishes the proof.
\end{proof}

\section{Non-nilpotent examples of \texorpdfstring{$J$}{}-groups}\label{S5}

In this section we study metacyclic $J$-groups that are not nilpotent.

Recall that the group $\mathbb{Z}_n^\times$ of units of $\mathbb{Z}_n$ has order $\phi(n)$. For $\alpha\in\mathbb{Z}_n$ the order of $\alpha$ modulo~$n$ is the least $i\ge1$ with $\alpha^i\equiv1\pmod n$ and is denoted $\textup{ord}_n(\alpha)$.

We consider metacyclic groups of the form $\C_s\rtimes\C_r$ where $rs$ is odd. Let $\alpha\in\{1,\dots,s\}$ satisfy and $\alpha^r\equiv 1\pmod s$ (so $\textup{ord}_s(\alpha)$ divides $r$) and define
\begin{equation}\label{E:Mrs}
  M_{r,s,\alpha}=\langle a,b\mid a^r=b^s=1, a^{-1}ba=b^\alpha\rangle.
\end{equation}
Now $M_{r,s,1}$ is abelian and hence a $J$-group by Lemma~\ref{L-sameexp}(b). We assume henceforth that $1<\alpha<s$, and we abbreviate $a^{-1}ba$ by $b^a$ and note that $a^{-\ell}b a^\ell=b^{\alpha^\ell}$ for $\ell\in\ZZ$.

Since $\langle\alpha\rangle\le\mathbb{Z}_s^\times$, Lagrange's Theorem shows that $\textup{ord}_s(\alpha)$ divides $|\mathbb{Z}_s^\times|=\phi(s)$.
On the other hand, $\alpha^r\equiv 1\pmod s$ implies that $\textup{ord}_s(\alpha)\mid r$, and hence $\textup{ord}_s(\alpha)\mid\gcd(r,\phi(s))$. 

We next determine when $M_{r,s,\alpha}$ is nilpotent (see Section \ref{S4} for the definition).

\begin{lemma}\label{Lmetanilp}
The  group $M_{r,s,\alpha}$ is nilpotent if and only if every prime factor of $s$ divides $\alpha-1$, that is, the square-free part of $s$ divides $\alpha-1$.
\end{lemma}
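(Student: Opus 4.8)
The plan is to analyze the structure of $G = M_{r,s,\alpha}$ via its Sylow decomposition and reduce the nilpotency question to coprimality conditions between $a$ and the various Sylow subgroups of $\langle b\rangle$. First I would recall that $G$ is nilpotent if and only if $G$ is the direct product of its Sylow subgroups, equivalently, every Sylow subgroup of $G$ is normal. Since $\langle b\rangle\cong\C_s$ is normal in $G$, I would write $s=\prod_q q^{e_q}$ over primes $q$, so $\langle b\rangle = \prod_q B_q$ where $B_q$ is the Sylow $q$-subgroup of $\langle b\rangle$, and each $B_q$ is characteristic in $\langle b\rangle$ hence normal in $G$. The conjugation action of $a$ on $B_q$ is by the automorphism $x\mapsto x^\alpha$, which on $B_q\cong\C_{q^{e_q}}$ is the unit $\alpha \bmod q^{e_q}$.

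Next I would observe that $G$ is nilpotent precisely when $a$ acts trivially on each $B_q$ modulo the appropriate Sylow normalizer condition — more precisely, since $G/\langle b\rangle\cong\C_r$ and the primes dividing $r$ may overlap with those dividing $s$, the cleanest route is: $G$ is nilpotent iff for every prime $q\mid s$, the subgroup $\langle a\rangle$ centralizes $B_q$ in the nilpotent quotient sense. A non-trivial action of $\alpha$ on $B_q$ produces a non-central element: the commutator $[a,b_q]$ where $b_q$ generates $B_q$ lies in $B_q$ and equals $b_q^{\alpha-1}$, and iterating never reaches the identity unless $q\mid \alpha - 1$. Concretely, $\gamma_i(G)$ will contain $b_q^{(\alpha-1)^{i-1}}$, so nilpotency forces $(\alpha-1)^{i-1}\equiv 0 \pmod{q^{e_q}}$ for some $i$, which holds iff $q \mid \alpha - 1$. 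Conversely, if $q\mid\alpha-1$ for every prime $q\mid s$, then $\alpha - 1$ is divisible by every prime factor of $s$, so the action of $a$ on $\langle b\rangle$ is unipotent (in the sense that $\alpha$ is congruent to $1$ modulo the radical of $s$), forcing the lower central series to terminate; here I would either invoke that a group acting on a $q$-group by an automorphism trivial modulo $q$ generates a nilpotent group, or argue directly that the Sylow subgroups of $G$ are all normal.

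The main subtlety — and the step I expect to require the most care — is handling the interaction when a prime $q$ divides both $r$ and $s$, and more generally making the "converse" direction rigorous: showing that the condition "every prime factor of $s$ divides $\alpha - 1$" actually forces nilpotency rather than merely making each $\langle a, b_q\rangle$ nilpotent. The clean way is to show that each Sylow subgroup of $G$ is normal: the Sylow $q$-subgroup for $q\nmid s$ is (conjugate to) a subgroup of $\langle a\rangle$ and one checks normality using that $a$ centralizes everything of order coprime to its own; for $q\mid s$, one shows the Sylow $q$-subgroup contains $B_q$ and is normalized because $a$ acts trivially modulo $q$ on it. I would lean on the standard fact that a finite group is nilpotent iff all its Sylow subgroups are normal, and verify the normality claims by explicit conjugation in the presentation \eqref{E:Mrs}, using $b^{a^\ell}=b^{\alpha^\ell}$ together with the Chinese Remainder decomposition of $\langle b\rangle$.
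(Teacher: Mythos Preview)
Your forward direction already contains the paper's whole argument, but you stop one step short. You note that iterating $[b,a]=b^{\alpha-1}$ gives $b^{(\alpha-1)^{i-1}}\in\gamma_i(G)$; the paper observes that in fact $\gamma_{\ell+1}(G)=\langle b^{(\alpha-1)^\ell}\rangle$ \emph{exactly} for every $\ell\ge1$ (an easy induction: the quotient $G/\langle b^{(\alpha-1)^\ell}\rangle$ has class at most $\ell$, which gives the reverse inclusion). With this equality in hand the converse is immediate: if every prime dividing $s$ divides $\alpha-1$, then $(\alpha-1)^\ell\equiv0\pmod s$ for large $\ell$, so $\gamma_{\ell+1}(G)=1$. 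No Sylow analysis is needed at all.

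Your Sylow-normality route for the converse can be made to work, but it is much heavier and the sketch has a gap. The claim that for $q\nmid s$ ``one checks normality using that $a$ centralizes everything of order coprime to its own'' is not correct: $a$ does not centralize $b$. What you actually need is that $b$ normalises the Sylow $q$-subgroup $A_q\le\langle a\rangle$; since $ba^jb^{-1}=a^jb^{\alpha^j-1}$, this amounts to $s\mid\alpha^j-1$ where $a^j$ generates $A_q$, which in turn requires knowing that $\textup{ord}_s(\alpha)$ is divisible only by primes of $s$ under the hypothesis (true, essentially by a lifting-the-exponent argument, but not something you supply). The mixed case $q\mid\gcd(r,s)$ needs a similar check. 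All of this is avoidable once you upgrade your containment to the equality $\gamma_{\ell+1}(G)=\langle b^{(\alpha-1)^\ell}\rangle$.
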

\begin{proof}
Set $G=M_{r,s,\alpha}$.
Using the fact that $[b,a]=b^{-1}b^a=b^{\alpha-1}$,  an easy induction shows that $\gamma_{\ell+1}(G)=\langle b^{{(\alpha-1)}^\ell}\rangle$ for $\ell>0$. Therefore $G$ is nilpotent if and only if ${(\alpha-1)}^\ell\equiv 0\pmod s$ for $\ell$ large enough, that is, if every prime factor of $s$ divides $\alpha-1$.
\end{proof}

The elements of $M_{r,s,\alpha}$ can be written uniquely as $a^ib^j$ where $0\le i<r$ and $0\le j<s$. Since $b^{j_1}a^{i_2}=a^{i_2}b^{j_2\alpha^{i_2}}$, we
have the following multiplication rule: 
\begin{equation}\label{E:mult}
a^{i_1}b^{j_1}a^{i_2}b^{j_2}=a^{i_1+i_2}b^{j_1\alpha^{i_2}+j_2}.
\end{equation}
To see that many groups $M_{r,s,\alpha}$ are $J$-groups, we use the 
following technical lemma.

\begin{lemma}\label{Lnumbertheory}
  Let $s$ be a positive odd integer, and let $\alpha\in\ZZ$ satisfy $\alpha^s\equiv 1\pmod s$. 
  Then {\rm (a)}~$\sum_{\ell=0}^{s-1}\alpha^{\ell}\equiv 0\pmod s$, and
  {\rm (b)}~$\sum_{\ell=0}^{s-1}\ell\alpha^{\ell}\equiv 0\pmod s$.
\end{lemma}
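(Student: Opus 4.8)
The plan is to prove (a) and (b) separately, each by strong induction on $s$, organised around the multiplicative order $d:=\textup{ord}_s(\alpha)$. The base case $s=1$ is trivial: the sum in (a) is $\alpha^0=1$ and the sum in (b) is $0$. So fix $s>1$ and assume the relevant statement for all odd positive integers below $s$. Since $\alpha^s\equiv1\pmod s$, the element $\alpha$ is a unit modulo $s$ (with inverse $\alpha^{s-1}$), so $d=\textup{ord}_s(\alpha)$ is well defined, divides $s$, and satisfies $d\le\phi(s)<s$. Put $m:=s/d$; then $m$ is odd (as $s$ is), and $\alpha^d\equiv1\pmod s$, hence also $\alpha^d\equiv1\pmod d$.

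\textbf{Key step.} The single idea driving both parts is to reindex the sum by $\ell=qd+r$ with $0\le q<m$ and $0\le r<d$, and to use $\alpha^{qd}\equiv1\pmod s$ to replace $\alpha^{qd+r}$ by $\alpha^r$ modulo $s$. Writing $S_d:=\sum_{r=0}^{d-1}\alpha^r$ and $T_d:=\sum_{r=0}^{d-1}r\alpha^r$, this gives, modulo $s$,
\[
\sum_{\ell=0}^{s-1}\alpha^\ell\equiv m\,S_d
\qquad\text{and}\qquad
\sum_{\ell=0}^{s-1}\ell\alpha^\ell\equiv d\binom{m}{2}S_d+m\,T_d ,
\]
where in the second case one collects the $q$-dependence using $\sum_{q=0}^{m-1}q=\binom{m}{2}$. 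For (a): by the inductive hypothesis applied to the odd modulus $d<s$ (legitimate since $\alpha^d\equiv1\pmod d$) we get $d\mid S_d$, so $m\,S_d=(s/d)S_d$ is a multiple of $s$. For (b): the term $m\,T_d=(s/d)T_d$ is a multiple of $s$ because $d\mid T_d$ by the inductive hypothesis for (b); and $d\binom{m}{2}S_d=\frac{m-1}{2}\cdot s\cdot S_d$ is visibly a multiple of $s$, since $dm=s$ and $m$ odd makes $\frac{m-1}{2}$ an integer. This completes both inductions, and Lemma~\ref{Lnumbertheory} follows.

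\textbf{Main obstacle.} I do not expect a serious obstacle: the argument is a clean descent on $\textup{ord}_s(\alpha)$, and no binomial-valuation estimates (Lemma~\ref{Kummer}) or lifting-the-exponent are needed. The two points worth stating carefully are (i) why $d<s$ when $s>1$ — because $d\mid\phi(s)$ and $\phi(s)<s$ — which is exactly what makes the induction terminate; and (ii) the parity step in the cross term of (b), which is the only place the hypothesis that $s$ is odd is genuinely used, consistent with the fact that (b) fails for $s=2$, $\alpha=1$ (while (a) holds for all $s$).
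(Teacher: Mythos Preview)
Your argument is correct. The induction on $s$ via the multiplicative order $d=\textup{ord}_s(\alpha)$ works cleanly: the key facts $d\mid s$, $d\le\phi(s)<s$, and $\alpha^d\equiv1\pmod d$ are all justified, the reindexing $\ell=qd+r$ and the resulting decompositions $mS_d$ and $d\binom{m}{2}S_d+mT_d$ are accurate, and the parity of $m$ is used exactly where it must be, in the cross term for (b).

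However, your approach is genuinely different from the paper's. The paper does not prove the lemma from scratch; it simply invokes \cite{DG}*{Theorem~1} (a result of Devillers--Glasby on sums of the form $\sum_\ell \ell^k\alpha^\ell$), specialising to $k=0$ for (a) and $k=1$ for (b). Your proof is entirely self-contained and elementary, requiring nothing beyond the definition of $\textup{ord}_s(\alpha)$ and the inequality $\phi(s)<s$. What the citation buys is brevity and a pointer to a more general statement (arbitrary $k$); what your argument buys is independence from an external reference and a transparent explanation of why oddness matters only for (b) --- a point the paper's one-line proof leaves opaque.
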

\begin{proof}
This follows from \cite{DG}*{Theorem 1}: take $k=0$ for part (a), and take $k=1$ then multiply by $\alpha$ for part (b).
\end{proof}

\begin{lemma}\label{L3bis}
Consider the group $M_{r,s,\alpha}$ given by~\eqref{E:Mrs}, where $rs$ is odd, $1<\alpha<s$ and $\alpha^r\equiv 1\pmod s$. Then  $b$ is a witness if and only if  $r$ divides $s$.  
\end{lemma}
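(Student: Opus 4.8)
By Lemma~\ref{L0}, $b$ is a witness for $G=M_{r,s,\alpha}$ if and only if $\prod_{i=1}^{n} x b^{n-i}=1$ for all $x\in G$, where $n=|b|=s$ (note $|b|=s$ since $b$ generates the normal $\C_s$ and $\langle a\rangle\cap\langle b\rangle=1$). So the plan is: fix an arbitrary element $x=a^{m}b^{t}$ of $G$, compute the product $P(x):=\prod_{i=1}^{s} x b^{s-i}$ in closed form using the multiplication rule~\eqref{E:mult}, and determine exactly when $P(x)=1$ for every choice of $m\in\{0,\dots,r-1\}$ and $t\in\{0,\dots,s-1\}$.

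\textbf{Step 1: compute the product.} Using~\eqref{E:mult} repeatedly, each factor $x b^{s-i}=a^{m}b^{t+s-i}$ contributes $a^{m}$ to the $a$-part and, after shifting all the $a$-powers to the left, the $b$-exponents get multiplied by powers of $\alpha$. A telescoping computation should give
\[
\prod_{i=1}^{s} a^{m}b^{s-i} \;=\; a^{ms}\, b^{E}, \qquad
E \;=\; \sum_{i=1}^{s}(s-i)\,\alpha^{m(i-1)} \pmod s,
\]
possibly with an extra contribution from the $t$'s that I expect to vanish mod $s$ by Lemma~\ref{Lnumbertheory}(a) (since $\sum_{i=1}^s \alpha^{m(i-1)}\equiv 0\pmod s$ when $\alpha^s\equiv 1$, which holds because $\operatorname{ord}_s(\alpha)\mid r$ and we will reduce to the case $r\mid s$, or handle it directly). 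So $P(x)=1$ iff $a^{ms}=1$ and $E\equiv 0\pmod s$.

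\textbf{Step 2: analyse the two conditions.} The condition $a^{ms}=1$ for all $m$ says $a^{s}=1$, i.e. $r\mid s$. For the $b$-part, rewrite $E=\sum_{\ell=0}^{s-1}(s-1-\ell)\alpha^{m\ell} = (s-1)\sum_{\ell=0}^{s-1}\alpha^{m\ell} - \sum_{\ell=0}^{s-1}\ell\,\alpha^{m\ell}$. If $r\mid s$ then $\alpha^m$ still satisfies $(\alpha^m)^s\equiv 1\pmod s$ (as $\alpha^r\equiv 1$ and $r\mid s$), so Lemma~\ref{Lnumbertheory}(a),(b) applied with $\alpha$ replaced by $\alpha^m$ give $\sum_{\ell=0}^{s-1}\alpha^{m\ell}\equiv 0$ and $\sum_{\ell=0}^{s-1}\ell\alpha^{m\ell}\equiv 0\pmod s$, hence $E\equiv 0\pmod s$. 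Thus $r\mid s$ forces $P(x)=1$ for all $x$, so $b$ is a witness. Conversely, if $b$ is a witness, taking $x=a$ (i.e. $m=1$, $t=0$) forces $a^{s}=1$, hence $r\mid s$. This proves the equivalence.

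\textbf{Main obstacle.} The only genuinely delicate point is the bookkeeping in Step~1: correctly collecting the powers of $a$ to the left and tracking how the original exponent $t$ and the running exponents $s-i$ transform under conjugation by $a^m$, so as to land precisely on a sum to which Lemma~\ref{Lnumbertheory} applies. One must also be a little careful that Lemma~\ref{Lnumbertheory} requires $\alpha^s\equiv 1\pmod s$ and not merely $\alpha^r\equiv 1$; this is why the computation of $P(x)$ for \emph{general} $x$ is cleanest to carry out after reducing to the case $r\mid s$ for the ``if'' direction, while the ``only if'' direction needs only the single test element $x=a$. Everything else is routine modular arithmetic.
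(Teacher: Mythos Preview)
Your proposal is correct and follows essentially the same approach as the paper: compute $\prod_{i=1}^{s} xb^{s-i}$ explicitly as $a^{ms}b^{E}$, read off $r\mid s$ from the $a$-part by testing $x=a$, and kill the $b$-part via Lemma~\ref{Lnumbertheory} applied to $\alpha^{m}$ once $r\mid s$ guarantees $(\alpha^{m})^{s}\equiv 1\pmod s$. The only slip is the indexing in your displayed formula for $E$ (the exponent of $\alpha$ should be $m(s-i)$, not $m(i-1)$, when you collect left-to-right), but your subsequent rewriting $E=(s-1)\sum_{\ell}\alpha^{m\ell}-\sum_{\ell}\ell\alpha^{m\ell}$ and the handling of the $t$-contribution $t\sum_{\ell}\alpha^{m\ell}$ land exactly on the sums covered by Lemma~\ref{Lnumbertheory}, so the argument goes through; the paper reaches the same expression slightly more cleanly by first invoking Lemma~\ref{L-1}(a) to write the product as $x^{s}\prod_{\ell}(b^{\ell})^{x^{\ell}}$.
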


\begin{proof}
Set $G=M_{r,s,\alpha}$.
As $b$ has order $s$, we compute $w_s\coloneq \prod_{\ell=1}^s xb^{s-\ell}$  for  $x\in G$. Collecting the $x$s to the left shows that
\[
  w_s=\prod_{\ell=1}^s xb^{s-\ell}=x^s(b^{s-1})^{x^{s-1}}\cdots (b^2)^{x^2}b^x=x^s.\prod_{\ell=1}^s (b^{s-\ell})^{x^{s-\ell}}.
\]
 Set $x=a^ib^j$. Then it follows from \eqref{E:mult} that $x^s=a^{is}b^{jm}$ where $m=\sum_{\ell=0}^{s-1}\alpha^{i\ell}$. Moreover, $b^x=b^{a^ib^j}=b^{a^i}=b^{\alpha^i}$ and so $(b^\ell)^{x^\ell}=b^{\ell\alpha^{i\ell}}$. Thus
$\prod_{\ell=1}^s (b^{s-\ell})^{x^{s-\ell}}=b^n$ where $n=\sum_{\ell=0}^{s-1}\ell \alpha^{i\ell}$. Hence $w_s$ equals $a^{is}b^{jm+n}$.

Suppose $b$ is a witness. Then $w_s=1$ for all  $0\leq i<r, 0\leq j<s$. In particular, taking $i=1$ we see that $r$ divides $s$. 

Conversely suppose $r\mid s$. It follows from $\alpha^r\equiv 1\pmod s$ that $(\alpha^i)^s\equiv 1\pmod s$. Thus, applying Lemma \ref{Lnumbertheory} to $\alpha^i$ shows that  
 $b^m=b^n=1$. Thus $w_s=a^{is}=1$ for all $i\in\{0,\dots,r-1\}$, since $\textup{ord}_s(\alpha)\mid r\mid s$.
In summary, $b\in\W(G)$ if and only if $r\mid s$.
\end{proof}

If $r\mid s$, then $M_{r,s,\alpha}$
is a $J$-group. Indeed, the witness $k=b$ and the function $f(a^ib^j)=a^{ij}b^{n(j)}$ where $n(j)=\sum_{\ell=0}^{j-1}\ell\alpha^{i\ell}$ satisfy \eqref{definingidentity}. Note that, as $b$ is a big element, $\B(G)\cap\W(G)$ is non-empty for this family of $J$-groups.

This allows us to find non-nilpotent examples of $J$-groups, with a big witness.

\begin{example}\label{exnonnilpJ2}
   Let $r=p$ an odd prime and let $s=pq$ where $q\equiv1\pmod p$ is a prime (there are infinitely such primes by Dirichlet's theorem). As
   $\mathbb{Z}_s^\times\cong\mathbb{Z}_p^\times\times\mathbb{Z}_q^\times$ has order $\phi(s)=(p-1)(q-1)$, there exists an $\alpha\in\mathbb{Z}_s^\times$ such that $\textup{ord}_s(\alpha)=p$. 
  Therefore, $\alpha^p\equiv 1\pmod p$. On the other hand $\alpha^{p-1}\equiv 1\pmod p$, and so $\alpha\equiv 1\pmod p$. 
  However, $\alpha\not\equiv 1\pmod s$ as $\textup{ord}_s(\alpha)=p$, so the Chinese Remainder Theorem implies that $\alpha\not\equiv 1\pmod q$.
   Thus $M_{r,s,\alpha}$ is a $J$-group by Lemma \ref{L3bis}, and it is non-nilpotent by Lemma \ref{Lmetanilp}.
 Thus the following groups are non-nilpotent $J$-groups: $M_{3,21,4}$,  $M_{3,39,16}$, $M_{5,155,16}$. 
\end{example}

In previous sections we concentrated on finding witnesses among big elements, see Question~\ref{Q2}. 
Now we can find an example of a $J$-group with a non-big witness.

\begin{example}\label{nonbigwitness}
Let $G$ be the $3$-group $M_{3,9,4}$. Clearly $G$ has exponent $9$. We show that $k=a^2$ is a witness of order $3$. To see this
it suffices to check by Lemma~\ref{L11} and \eqref{E:mult}, that $xk^2xkx=1$ holds for the double coset representatives $x\in\{1,b\}$. This is indeed true.
\end{example}

\section*{Acknowledgments}

SG and DB acknowledge the support of the Australian Research Council Discovery Grant DP190100450. TB acknowledges the support of 
projects P29931 and AR561 of the Austrian Science Foundation FWF. DB acknowledges support by the RWTH Aachen University Scholarship for Doctoral Students.
The research of JF was partially funded by the Deutsche Forschungsgemeinschaft (DFG, German Research Foundation) – Project-ID 286237555 – TRR 195.
We thank Mike Newman for his helpful comments and for verifying that $3$-groups of order dividing $3^{10}$ are $J$-groups.
This research project arose from a problem posed at the CMSC Annual Research
Retreat in 2018. We thank the CMSC for its support.
Finally, we thank the referee for their
careful reading of the paper and helpful comments, and Rafael Dahmen
for spotting a minor error in Lemma~\ref{L9}.

\end{document}